\def\E{{\mathds E}} 
\def\P{{\mathcal P}}
\def\R{{\mathds R}}
\def\N{{\mathds N}}
\def\PP{{\mathds P}}
\def\F{{\mathcal F}}
\def\DD{{\mathds D}}
\def\LL{{\mathcal L}}
\def\const{C}
\numberwithin{equation}{section}
\newtheorem{theorem}{Theorem}[section]
\newtheorem{lemma}[theorem]{Lemma}
\theoremstyle{definition}
\newtheorem{example}[theorem]{Example}
\newtheorem{remark}[theorem]{Remark}
\newtheorem*{assumption}{Standing assumptions}
\begin{document}
\title[a characterization of transportation-information inequalities]{a characterization of  transportation-information inequalities for Markov processes in terms of dimension-free concentration}
\author{Daniel Lacker and Lane Chun Yeung} 
\thanks{	D. Lacker was partially supported by the Air Force Office of Scientific Research Grant FA9550-19-1-0291.
}
\address{Department of Industrial Engineering \& Operations Research, Columbia University}
\email{daniel.lacker@columbia.edu, l.yeung@columbia.edu}

\begin{abstract}
Inequalities between transportation costs and Fisher information are known to characterize certain concentration properties of Markov processes around their invariant measures.
This note provides a new characterization of the quadratic transportation-information inequality $W_2I$ in terms of a dimension-free concentration property for i.i.d.\ (conditionally on the initial positions) copies of the underlying Markov process.
This parallels Gozlan's characterization of the quadratic transportation-entropy inequality $W_2H$.
The proof is based on a new Laplace-type principle for the operator norms of Feynman-Kac semigroups, which is of independent interest.
Lastly, we illustrate how both our theorem and (a form of) Gozlan's are instances of a general convex-analytic tensorization principle. 
\end{abstract}

\maketitle

\section{Introduction} \label{sec: introduction}

There is by now a vast literature on the connections between concentration of measure and \emph{transportation-entropy inequalities}, which bound Wasserstein distances in terms of relative entropy. See \cite{gozlan2010transport,ledoux2001concentration} for thorough discussions.
In this note, we focus on a somewhat newer class of inequalities between Wasserstein distances and Fisher information, introduced in \cite{guillin2009transportation} and studied further in  \cite{guillin2009transportation2,ma2011transportation,gao2014bernstein,cattiaux2014semi,liu2017new,wang2020}. These \emph{transportation-information inequalities}  characterize the concentration of Markov processes around their invariant measures.
We show in this paper that, for an ergodic Markov process on a Polish space $E$ with invariant measure $\mu$, the quadratic transportation-information inequality for $\mu$ is equivalent to a dimension-free rate of convergence to equilibrium for the natural Markov process associated with invariant measure $\mu^{\otimes n}$. We present this main result first, and then we discuss its close analogy with Gozlan's characterization of the quadratic transportation-entropy inequality  \cite{gozlan2009characterization}, along with other related literature.

Let us first fix notation. Fix throughout the paper a complete separable metric space  $(E,d)$.
Denote by $B(E)$ the set of measurable and bounded real-valued functions on $E$. Let $\P(E)$ be
the space of Borel probability measures on $E$, equipped with the topology of weak convergence.

For $\nu, \mu\in \P(E)$, the $p$-order Wasserstein
distance is defined as usual by 
\begin{equation}
W_{p}(\nu,\mu)\coloneqq\left(\inf_{\pi}\int_{E \times E}d^{p}(x,y)\,\pi\left(\mathrm{d}x,\mathrm{d}y\right)\right)^{1/ p},\label{eq: wasserstein}
\end{equation}
where the infimum is over all couplings $\pi$ of $\nu$ and $\mu$. (The value $+\infty$ is allowed.)

Fix a Borel probability measure $\mu$ on $E$.
We work with a continuous-time $E$-valued Markov process, governed by the family $(\Omega,\F,(X_{t})_{t\geq 0},(\PP_{x})_{x\in E})$.
The transition semigroup is denoted by $\left(\mathrm{P}_{t}\right)_{t\geq 0}$, defined as usual by $\mathrm{P}_tf(x) \coloneqq \E_x[f(X_t)]$.
\begin{assumption}
Throughout this note, we assume the following  two conditions:
\begin{enumerate}[label=(\roman*)]
\item  The probability measure $\mu$ is ergodic and reversible  for $(X_{t})_{t\geq 0}$. 
\item  The semigroup $\left(\mathrm{P}_{t}\right)_{t\geq 0}$ is  strongly continuous on $L^2(\mu)$. 
\end{enumerate}
\end{assumption}
Let $\LL$
denote the infinitesimal generator of $(\mathrm{P}_{t})_{t\geq 0}$
with domain denoted by $\DD(\LL) \subset L^2(\mu)$.
The corresponding Dirichlet form is defined by 
\begin{equation} \label{eq: diri form}
\mathcal{E}(g,g) \coloneqq -\int_E g \LL g \, \mathrm{d} \mu, \quad 
\text{for } g \in \DD(\LL).
\end{equation} 
Under our standing assumptions, $\mathcal{E}$ is closable in the Hilbert
space $L^{2}(\mu)$ and its closure $(\mathcal{E},\DD(\mathcal{E}))$
has domain $\DD(\mathcal{E})= \DD(\sqrt{-\LL})$
in $L^{2}(\mu)$. For $\nu \in \P(E)$, the \emph{Fisher information} of $\nu$ with respect to $\mu$ is defined by 
\begin{equation}
I(\nu \,|\, \mu)\coloneqq\begin{cases}
\mathcal{E}\left(\sqrt{f},\sqrt{f}\right) &\text{if } \nu \ll \mu, \, \frac{d\nu}{d\mu} = f, \text{ and }\sqrt{f}\in\DD(\mathcal{E})\\
+\infty &\text{otherwise}.
\end{cases}\label{def:I}
\end{equation}

\begin{example} \label{ex:diffusion}
A classical example comes from diffusion processes: Let $E$ be a complete connected (finite-dimensional) Riemannian manifold equipped with its geodesic distance $d$ and volume measure $\mathrm{d}x$. Let $V \in C^1(E)$ be such that $\mu(\mathrm{d}x)=e^{-V(x)}\mathrm{d}x$ defines a probability measure. Let $\LL = \Delta - \nabla V \cdot \nabla$. Then $\mathcal{E}(g,g) = \int_E |\nabla g|^2\,\mathrm{d}\mu$, and
\[
I(f\mu\,|\,\mu) = \int_E | \nabla \sqrt{f}|^2 \, \mathrm{d}\mu = \frac14 \int_E | \nabla \log f|^2 f\,\mathrm{d}\mu.
\]
\end{example}

The transportation-information inequalities of interest in this note are the following:\footnote{Take note that different authors adopt different conventions regarding the constant $\const$. For instance, \cite{guillin2009transportation} uses $4\const^2$ where we use $\const$.}
For $\const > 0$ and $p\geq 1$,
we say that \emph{$\mu$ satisfies the $W_{p}I(\const)$ inequality} if
\begin{equation}
W_{p}^2(\mu,\nu) \leq  \const I(\nu \,|\, \mu), \quad \text{for all } \nu \in \P(E). \label{def:WpI}
\end{equation}

\subsection{A known characterization of $W_1I$}
In \cite{guillin2009transportation},  characterizations of $W_1I$ are provided in terms of concentration properties for the Markov process. 
They make use of the Feynman-Kac semigroups $(\mathrm{P}_t^{f})_{t \ge 0}$, defined (as in \cite{guillin2009transportation,wu1994feynman,wu2000deviation}) for $f \in B(E)$ by
\begin{align*}
\mathrm{P}_t^{f}g(x) \coloneqq \E_x\left[ g(X_t) \exp\left(\int_0^t f(X_s) \mathrm{d}s \right) \right], \quad x \in E,
\end{align*}
for any measurable function $g$ for which the expectation is well-defined. They make use also of
the operator norm
\begin{align}\label{eq: op norm}
	\big\| \mathrm{P}_t^{f}\big\|_{L^{2}(\mu)} & \coloneqq\sup\left\{ \big\|\mathrm{P}_t^{f}g\big\|_{L^2(\mu)}\, : g\geq0, \, \int_{E}g^{2}\,\mathrm{d}\mu \leq 1 \right\}.
\end{align} 
This coincides with the spectral radius of the bounded symmetric operator $\mathrm{P}_t^{f}$. Moreover, $(\mathrm{P}_t^f)_{t \ge 0}$ is a strongly continuous semigroup with infinitesimal generator given by $g \mapsto \LL g + fg$.

\begin{theorem}\label{th:W1I} \cite[Corollary 2.5]{guillin2009transportation}
Assume there exists $x_0 \in E$ such that $\int_E d^2(x,x_0)\,\mu(\mathrm{d}x) < \infty$.
Let $\const >0$. The following are equivalent:
\begin{enumerate}
\item \label{enu: original equivalence1}$\mu$ satisfies the $W_1I (\const)$ inequality.
\item \label{enu: original equivalence2} For any $\lambda \in \R$, $t > 0$,
and 1-Lipschitz function $f : E \rightarrow \R$,
\begin{equation*}
\frac{1}{t}\log \| \mathrm{P}_{t}^{\lambda f}\|_{L^{2}\left(\mu\right)}\leq  \lambda\int_{E}f\,\mathrm{d}\mu+\frac{\const \lambda^{2}}{4}.
\end{equation*}
\item \label{enu: original equivalence3} For any $r,t>0$,
1-Lipschitz function $f : E \rightarrow \R$, and $\nu\in\P(E)$
such that $\mathrm{d}\nu\slash\mathrm{d}\mu\in L^{2}(\mu)$,
\begin{equation*}
\PP_{\nu}\left(\frac{1}{t}\int_{0}^{t}f(X_{s})\,\mathrm{d}s \geq \int_{E}f\,\mathrm{d}\mu+r\right)\leq\left\Vert \frac{\mathrm{d}\nu}{\mathrm{d}\mu}\right\Vert _{L^2(\mu)}\exp\left(- \frac{t r^2}{\const}\right),
\end{equation*}
where $\PP_{\nu}(\cdot)\coloneqq\int_{E}\PP_{x}(\cdot)\,\nu(\mathrm{d}x)$.
\end{enumerate}
\end{theorem}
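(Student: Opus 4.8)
The plan is to establish $(1)\Leftrightarrow(2)$, then $(2)\Rightarrow(3)$, and then close the loop with $(3)\Rightarrow(2)$ (or, via the alternative below, $(3)\Rightarrow(1)$), resting on two dual representations: a spectral/variational formula for the Feynman--Kac operator norm, and the Kantorovich--Rubinstein formula for $W_1$. The cornerstone is the following identity. For a bounded $1$-Lipschitz $f$ and $\lambda\in\R$, the operator $\LL+\lambda f$ is a bounded self-adjoint perturbation of $\LL$ with the same form domain $\DD(\mathcal{E})$, so $\mathrm{P}_t^{\lambda f}=e^{t(\LL+\lambda f)}$; since the operator norm of the exponential of a self-adjoint operator is the exponential of the supremum of its spectrum,
\begin{equation*}
\tfrac1t\log\big\|\mathrm{P}_t^{\lambda f}\big\|_{L^2(\mu)}=\sup\sigma(\LL+\lambda f)=\sup_{\nu\in\P(E)}\Big(\lambda\!\int_E f\,\mathrm{d}\nu-I(\nu\,|\,\mu)\Big),
\end{equation*}
where the last step is the Rayleigh-quotient formula for the top of the spectrum together with the substitution $\nu=g^2\mu$ (using $I(g^2\mu\,|\,\mu)=\mathcal{E}(g,g)$ and the fact that enlarging the supremum to all of $\P(E)$ is harmless, since $I(\nu\,|\,\mu)=+\infty$ off the admissible set). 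Note this quantity is independent of $t$.

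Granting this, $(1)\Leftrightarrow(2)$ is convex duality. By the displayed identity, (2) says exactly that $I(\nu\,|\,\mu)\ge\lambda\big(\int f\,\mathrm{d}\nu-\int f\,\mathrm{d}\mu\big)-\tfrac{\const\lambda^2}{4}$ for every $\nu\in\P(E)$, $\lambda\in\R$ and $1$-Lipschitz $f$; maximizing the right-hand side over $\lambda$ turns this into $\big(\int f\,\mathrm{d}\nu-\int f\,\mathrm{d}\mu\big)^2\le\const\,I(\nu\,|\,\mu)$, and taking the supremum over (bounded) $1$-Lipschitz $f$ and invoking Kantorovich--Rubinstein yields $W_1^2(\mu,\nu)\le\const\,I(\nu\,|\,\mu)$, i.e.\ (1); the passage between bounded and general $1$-Lipschitz test functions is a routine truncation/Fatou argument on the Feynman--Kac side. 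For $(2)\Rightarrow(3)$ I would use a Chernoff bound: for $\lambda>0$, Markov's inequality bounds the left-hand side of (3) by $e^{-\lambda t(\int f\,\mathrm{d}\mu+r)}\,\E_\nu\big[\exp\big(\lambda\int_0^t f(X_s)\,\mathrm{d}s\big)\big]$, and the expectation equals $\int_E\mathrm{P}_t^{\lambda f}1\cdot\tfrac{\mathrm{d}\nu}{\mathrm{d}\mu}\,\mathrm{d}\mu\le\|\mathrm{P}_t^{\lambda f}\|_{L^2(\mu)}\,\|\mathrm{d}\nu/\mathrm{d}\mu\|_{L^2(\mu)}$ by Cauchy--Schwarz and $\|1\|_{L^2(\mu)}=1$; inserting the bound of (2) and optimizing over $\lambda>0$ (at $\lambda=2r/\const$) gives exactly $\|\mathrm{d}\nu/\mathrm{d}\mu\|_{L^2(\mu)}\exp(-tr^2/\const)$.

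The substantive step is the converse. Taking $\nu=\mu$ in (3), and assuming without loss of generality $\int f\,\mathrm{d}\mu=0$, one has $\PP_\mu\big(\tfrac1t\int_0^t f(X_s)\,\mathrm{d}s\ge r\big)\le e^{-tr^2/\const}$ for all $r>0$; integrating this tail bound (via $\E Y=\int_0^\infty\PP(Y>y)\,\mathrm{d}y$ with $Y=\exp(\lambda\int_0^t f\,\mathrm{d}s)$, followed by a Gaussian integral) gives $\E_\mu[\exp(\lambda\int_0^t f\,\mathrm{d}s)]\le 1+\lambda\sqrt{\pi\const t}\,e^{\const\lambda^2t/4}$, hence $\limsup_{t\to\infty}\tfrac1t\log\E_\mu[\exp(\lambda\int_0^t f\,\mathrm{d}s)]\le\tfrac{\const\lambda^2}{4}$. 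Since $\E_\mu[\exp(\lambda\int_0^t f\,\mathrm{d}s)]=\langle\mathrm{P}_t^{\lambda f}1,1\rangle_{L^2(\mu)}$, it then remains to match the exponential growth rate of this scalar matrix element with $\sup\sigma(\LL+\lambda f)$; by the first display this gives $\tfrac1t\log\|\mathrm{P}_t^{\lambda f}\|_{L^2(\mu)}\le\tfrac{\const\lambda^2}{4}$, which, together with the normalization $\int f\,\mathrm{d}\mu=0$ and the observation that $\lambda<0$ is handled by passing to $-f$ while $\lambda=0$ is the $L^2$-contractivity of $\mathrm{P}_t$, is condition (2), hence (1). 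I expect this last identification to be the main obstacle, as it is precisely where ergodicity of $\mu$ enters: one needs that the Feynman--Kac semigroup $(\mathrm{P}_t^{\lambda f})$ is irreducible and positivity improving, so that $\langle\mathrm{P}_t^{\lambda f}1,1\rangle_{L^2(\mu)}$ grows at the top-of-spectrum rate --- this is the Donsker--Varadhan principal-eigenvalue theory. An alternative route that isolates the same input: combine (3) with the Donsker--Varadhan lower large-deviation bound $\liminf_t\tfrac1t\log\PP_\nu(L_t\in G)\ge-\inf_{G}I(\cdot\,|\,\mu)$ for the empirical measures $L_t=\tfrac1t\int_0^t\delta_{X_s}\,\mathrm{d}s$ and open $G\subset\P(E)$, applied to $G=\{\eta:\int f\,\mathrm{d}\eta>r\}$, to obtain $I(\eta\,|\,\mu)\ge\tfrac1\const\big(\int f\,\mathrm{d}\eta-\int f\,\mathrm{d}\mu\big)^2$ for every $\eta\in\P(E)$, and then conclude via Kantorovich--Rubinstein as before.
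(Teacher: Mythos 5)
The paper does not actually prove this theorem; it is imported wholesale from \cite[Corollary 2.5]{guillin2009transportation}, so there is no internal proof to compare against. That said, your cornerstone identity is exactly the content of the paper's Lemmas \ref{lem: time invariance of spectral radius} and \ref{lemma: duality} (stated there over $\mu^{\otimes n}$), your derivation of $(1)\Leftrightarrow(2)$ from it is precisely the paper's abstract Theorem \ref{th:W1abstract}, and your Chernoff argument for $(2)\Rightarrow(3)$ is the standard one and is correct. Two technical caveats: restricting the Rayleigh quotient to nonnegative $g$ (so that $\nu=g^2\mu$ ranges over $\P(E)$) is not automatic and requires the contraction property $\mathcal{E}(|g|,|g|)\le\mathcal{E}(g,g)$ of the Dirichlet form; and since the theorem allows unbounded Lipschitz $f$ (only $\int_E d^2(\cdot,x_0)\,\mathrm{d}\mu<\infty$ is assumed), the truncation step you wave at is genuinely needed on both sides of the duality, not merely cosmetic.

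The one substantive gap is in $(3)\Rightarrow(2)$. Everything up to $\limsup_{t\to\infty}\tfrac1t\log\langle \mathrm{P}_t^{\lambda f}1,1\rangle_{L^2(\mu)}\le \const\lambda^2/4$ is fine, but the passage from there to $\tfrac1t\log\|\mathrm{P}_t^{\lambda f}\|_{L^2(\mu)}\le\const\lambda^2/4$ is not a ``matching'' you can simply assert: by the spectral theorem, $\langle\mathrm{P}_t^{\lambda f}1,1\rangle=\int e^{ts}\,\mathrm{d}m(s)$ where $m$ is the spectral measure of the vector $1$, and the exponential growth rate of this quantity is $\sup\operatorname{supp}(m)$, which a priori may lie strictly below $\sup\sigma(\LL+\lambda f)$. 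Proving that the two coincide for the constant function $1$ is a genuine theorem requiring irreducibility/positivity-preservation of the Feynman--Kac semigroup (this is indeed where ergodicity enters, as you suspect), and you neither prove it nor cite a precise statement. The same remark applies to your alternative route: the Donsker--Varadhan lower bound for occupation measures of symmetric processes started from $\nu$ with $\mathrm{d}\nu/\mathrm{d}\mu\in L^2(\mu)$ is itself a nontrivial result (due to Wu), not a free input. So your architecture is the right one --- it is essentially the proof in \cite{guillin2009transportation} --- but as written the converse direction rests on an unproved, though true and citable, spectral identification.
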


In other words, Theorem \ref{th:W1I} characterizes the $W_1I$ inequality in terms of \eqref{enu: original equivalence2} concentration inequalities for the operator norms of the Feynman-Kac semigroups and \eqref{enu: original equivalence3} deviation inequalities for the time-averages
$\frac{1}{t}\int_{0}^{t}f(X_{s})\mathrm{d}s$ from
the spatial averages 	$\int_{E}f\,\mathrm{d}\mu$ for Lipschitz $f$.

\subsection{A new characterization of $W_2I$} \label{se:introW2I}
Our main result, Theorem \ref{th:W2I-main} below, provides a similar characterization for the $W_2I$ inequality, in which conditions \eqref{enu: original equivalence2} and \eqref{enu: original equivalence3} are replaced by dimension-free counterparts involving the product measures $\mu^{\otimes n}$. We first need some notation.

For any $n \in \N$, we define in the natural way the Markov process $(X^1_t,\ldots,X^n_t)_{t\geq 0}$ on $E^n$ in which each coordinate evolves independently according to the original process on $E$. The corresponding probability measures $(\PP^n_{x})_{x\in E^n}$ and expectations $(\E^n_x)_{x\in E^n}$ are determined by the identities
\begin{align*}
\E^n_{(x_1,\ldots,x_n)}\left[ \prod_{i=1}^n g_i(X^i)\right] = \prod_{i=1}^n\E_{x_i}[g_i(X)],
\end{align*}
for $(x_1,\ldots,x_n) \in E^n$ and bounded measurable functions $g_i$.
The corresponding infinitesimal generator
maps a suitable function $f$ to the function $x \mapsto \sum_{i=1}^n \LL f(\cdot,x_{-i})(x_i)$, where $f(\cdot,x_{-i}) := f(x_1,\ldots,x_{i-1},\cdot,x_{i+1},\ldots,x_n)$ for $x=(x_1,\ldots,x_n) \in E^n$.
 That is, $\LL$ acts on each coordinate separately, and we sum over the coordinates. The corresponding Dirichlet form is the so-called \emph{sum-form}:
\begin{align}
\mathcal{E}^{\oplus n}(g,g) = \int_{E^n} \sum_{i=1}^n \mathcal{E}(g(\cdot,x_{-i}),g(\cdot,x_{-i})) \,\mu^{  \otimes n}(\mathrm{d}x). \label{def:sumform}
\end{align}
The domain $\DD(\mathcal{E}^{\oplus n})$ is the set of $g \in L^2(\mu^{\otimes n})$ for which $g(\cdot,x_{-i}) \in \DD(\mathcal{E})$ for $\mu^{\otimes n}$-a.e.\ $x \in E^n$ and  the integral on the right-hand side of the above equation is finite.
The Fisher information $I(\nu\,|\,\mu^{\otimes n})$ for $\nu \in \P(E^n)$ is defined analogously to \eqref{def:I}:
\begin{align}
I(\nu\,|\,\mu^{\otimes n}) &= \begin{cases}
\mathcal{E}^{\oplus n}(\sqrt{f},\sqrt{f}) &\text{if } \nu \ll \mu^{\otimes n}, \, \frac{d\nu}{d\mu^{\otimes n}} = f, \text{ and } \sqrt{f} \in \DD(\mathcal{E}^{\oplus n}) \\
+\infty &\text{otherwise}.
\end{cases} \label{def:In}
\end{align}
For $f \in B(E^n)$, let $(\mathrm{P}_{n,t}^{f})_{t\geq0}$
be the $n$-dimensional Feynman-Kac semigroup, given by
\begin{equation}
\mathrm{P}_{n,t}^{f}\,g(x)\coloneqq\E ^n_{x}\left[g(X_{t}^{1},\dots,X_{t}^{n})\exp\left(\int_{0}^{t}f(X_{s}^{1},\dots,X_{s}^{n})\mathrm{d}s\right)\right], \quad x \in E^n. \label{eq: FK semigroup}
\end{equation}
Its operator norm is defined in the usual way, by 
\begin{align}
\big\| \mathrm{P}_{n,t}^{f}\big\|_{L^{2}(\mu^{\otimes n})} & \coloneqq\sup\left\{ \big\|\mathrm{P}_{n,t}^{f}\,g\big\|_{L^{2}(\mu^{\otimes n})}:g\geq0,\int_{E^{n}}g^{2}\,\mathrm{d}\mu^{\otimes n}\leq1\right\}. \label{eq: operator norm def}
\end{align}
 Unless stated otherwise, the product space $E^n$ is equipped with the $\ell^2$-metric 
\begin{align}
((x_1,\ldots,x_n),(y_1,\ldots,y_n)) \mapsto \sqrt{ \sum_{i=1}^n d^2(x_i,y_i) }. \label{def:ell2metric}
\end{align}
The Wasserstein distance $W_p$ on $\P(E^n)$ is defined relative to this metric, as is the $W_pI(\const)$ inequality for $\mu^{\otimes n}$.
The main result of this note is the following.

\begin{theorem} \label{th:W2I-main} 
Assume there exists $x_0 \in E$ such that $\int_E d^2(x,x_0)\,\mu(\mathrm{d}x) < \infty$.
Let $\const >0$. The following are equivalent:
\begin{enumerate}
\item \label{enu: equivalence1} $\mu$ satisfies the
$W_2I(\const )$ inequality. 
\item \label{enu: equivalence2} For
each $n\in\N$, $\mu^{\otimes n}$ satisfies the $W_1I(\const )$ inequality.
\item\label{enu: equivalence3} For each $n\in\N$, $\lambda\in \R$, $t > 0$, and 1-Lipschitz function $f: E^n \rightarrow \R$,
\begin{equation}
\frac{1}{t}\log \big\| \mathrm{P}_{n,t}^{\lambda f}\big\|_{L^{2}(\mu^{\otimes n})}\leq  \lambda\int_{E^{n}}f \, \mathrm{d}\mu^{\otimes n}+\frac{\const \lambda^{2}}{4}. \label{eq: dimension-free concentration}
\end{equation}
\item \label{enu: equivalence4} For each $n\in\N$, $r,t > 0$,
1-Lipschitz function $f: E^n \rightarrow \R$, and $\nu\in\P(E^{n})$
such that $\mathrm{d}\nu/\mathrm{d}\mu^{\otimes n}\in L^{2}(\mu^{\otimes n})$, we have
\begin{equation}
\PP_{\nu}^{n}\left(\frac{1}{t}\int_{0}^{t}f\left(X_{s}^{1},\dots,X_{s}^{n}\right)\mathrm{d}s-\int_{E^{n}}f\,\mathrm{d}\mu^{\otimes n}\geq r\right)\leq\left\Vert \frac{\mathrm{d\nu}}{\mathrm{d}\mu^{\otimes n}}\right\Vert_{L^{2}(\mu^{\otimes n})}\exp\left(-\frac{t r^{2}}{\const}\right),\label{eq: conc-1}
\end{equation}
where $\PP_{\nu}^{n}(\cdot)\coloneqq\int_{E^{n}}\PP_{x}^{n}(\cdot)\,\nu(\mathrm{d}x)$. 
\end{enumerate}
\end{theorem}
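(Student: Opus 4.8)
I would organize the proof as \eqref{enu: equivalence1}$\Rightarrow$\eqref{enu: equivalence2}$\Rightarrow$\eqref{enu: equivalence1}, supplemented by the routine equivalences \eqref{enu: equivalence2}$\Leftrightarrow$\eqref{enu: equivalence3}$\Leftrightarrow$\eqref{enu: equivalence4}. For the latter, fix $n$ and note that the product process on $E^n$ is again a Markov process satisfying the standing assumptions relative to $\mu^{\otimes n}$: its generator $\LL^{\oplus n}$ acts coordinatewise (so reversibility is immediate), ergodicity and strong continuity on $L^2$ pass to the $n$-fold tensor product, and the second-moment condition holds since $\int_{E^n}d^2(x,(x_0,\dots,x_0))\,\mu^{\otimes n}(\mathrm{d}x)=n\int_E d^2(x,x_0)\,\mu(\mathrm{d}x)<\infty$. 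Its Dirichlet form is the sum-form \eqref{def:sumform}, its Fisher information is $I(\cdot\,|\,\mu^{\otimes n})$, and its Feynman--Kac semigroup is $(\mathrm P_{n,t}^{f})$. Hence Theorem~\ref{th:W1I} applied to this process shows, for each fixed $n$, that $W_1I(\const)$ for $\mu^{\otimes n}$ is equivalent to \eqref{eq: dimension-free concentration} and to \eqref{eq: conc-1}; conjoining over $n$ yields \eqref{enu: equivalence2}$\Leftrightarrow$\eqref{enu: equivalence3}$\Leftrightarrow$\eqref{enu: equivalence4}.

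For \eqref{enu: equivalence1}$\Rightarrow$\eqref{enu: equivalence2} I would prove the stronger claim that $W_2I(\const)$ \emph{tensorizes}, i.e.\ $\mu^{\otimes n}$ satisfies $W_2I(\const)$ for the $\ell^2$-metric; then $W_1\le W_2$ gives $W_1I(\const)$. Let $\nu\in\P(E^n)$ with $I(\nu\,|\,\mu^{\otimes n})<\infty$; write $\hat\nu_i(\cdot\,|\,x_{-i})$ for the conditional law under $\nu$ of the $i$-th coordinate given all others, and $\bar\nu_i(\cdot\,|\,x_{1:i-1})$ for the conditional given the first $i-1$. Factoring the density of $\nu$ on each coordinate into a marginal times a conditional and using that $\mathcal E$ is quadratic, the sum-form \eqref{def:sumform} gives the exact identity $I(\nu\,|\,\mu^{\otimes n})=\sum_{i=1}^n\int_{E^n}I\big(\hat\nu_i(\cdot\,|\,x_{-i})\,\big|\,\mu\big)\,\nu(\mathrm{d}x)$. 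Since $\bar\nu_i(\cdot\,|\,x_{1:i-1})$ is the $\nu$-conditional average of $\hat\nu_i(\cdot\,|\,X_{-i})$ over the coordinates after $i$, and $\rho\mapsto I(\rho\,|\,\mu)$ is convex, Jensen's inequality gives $I(\nu\,|\,\mu^{\otimes n})\ge\sum_{i=1}^n\int_{E^n}I\big(\bar\nu_i(\cdot\,|\,x_{1:i-1})\,\big|\,\mu\big)\,\nu(\mathrm{d}x)$. On the other hand, the classical tensorization of the quadratic transport cost (a coupling built conditionally over the coordinates) gives $W_2^2(\mu^{\otimes n},\nu)\le\sum_{i=1}^n\int_{E^n}W_2^2\big(\mu,\bar\nu_i(\cdot\,|\,x_{1:i-1})\big)\,\nu(\mathrm{d}x)$; applying $W_2I(\const)$ for $\mu$ to each $\bar\nu_i(\cdot\,|\,x_{1:i-1})$ and summing closes the argument.

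The implication \eqref{enu: equivalence2}$\Rightarrow$\eqref{enu: equivalence1} is the crux. Fix $\rho\in\P(E)$; we must show $W_2^2(\mu,\rho)\le\const I(\rho\,|\,\mu)$, and may assume $\rho=h\mu$ with $\sqrt h\in\DD(\mathcal E)$. Applying \eqref{enu: equivalence2} with $\nu=\rho^{\otimes n}$, and using that the conditional of one coordinate of $\rho^{\otimes n}$ given the rest is $\rho$ (so $I(\rho^{\otimes n}\,|\,\mu^{\otimes n})=nI(\rho\,|\,\mu)$ by the above identity), yields $\tfrac1n W_1^2(\mu^{\otimes n},\rho^{\otimes n})\le\const I(\rho\,|\,\mu)$ for every $n$, so it remains to prove the Laplace-type lower bound
\[
\liminf_{n\to\infty}\tfrac1n\,W_1^2(\mu^{\otimes n},\rho^{\otimes n})\ \ge\ W_2^2(\mu,\rho)
\]
and let $n\to\infty$. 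To this end set $L_n(x)\coloneqq\tfrac1n\sum_{i=1}^n\delta_{x_i}$; using the law of large numbers for empirical measures in $W_2$ (valid as $\mu$ has a finite second moment), choose $\varepsilon_n\downarrow0$ so that $A_n\coloneqq\{y\in E^n:W_2(L_n(y),\mu)\le\varepsilon_n\}$ has $\mu^{\otimes n}(A_n)\to1$. Then $g_n\coloneqq d_{\ell^2}(\cdot,A_n)$ is $1$-Lipschitz on $(E^n,\ell^2)$, and since matching empirical measures gives $W_2(L_n(x),L_n(y))\le\tfrac1{\sqrt n}\,d_{\ell^2}(x,y)$, the triangle inequality yields $g_n(x)\ge\sqrt n\,\big(W_2(L_n(x),\mu)-\varepsilon_n\big)^+$. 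A uniform-integrability estimate then gives $\int g_n\,\mathrm{d}\mu^{\otimes n}=o(\sqrt n)$, while Fatou's lemma combined with $W_2(L_n(Y),\mu)\to W_2(\rho,\mu)$ $\rho^{\otimes n}$-a.s.\ gives $\liminf_n\tfrac1{\sqrt n}\int g_n\,\mathrm{d}\rho^{\otimes n}\ge W_2(\rho,\mu)$; feeding these into the Kantorovich--Rubinstein bound $W_1(\mu^{\otimes n},\rho^{\otimes n})\ge\int g_n\,\mathrm{d}\rho^{\otimes n}-\int g_n\,\mathrm{d}\mu^{\otimes n}$ delivers the claim (infinite moments are handled by truncating $d$). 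Conceptually — and this is the form that makes the parallel with Gozlan's theorem transparent — one packages this through the spectral identity $\tfrac1t\log\|\mathrm P_{n,t}^{f}\|_{L^2(\mu^{\otimes n})}=\sup_{\nu\in\P(E^n)}\{\int f\,\mathrm{d}\nu-I(\nu\,|\,\mu^{\otimes n})\}$ (valid since $\mathrm P_{n,t}^{f}=e^{t(\LL^{\oplus n}+f)}$ is a positive self-adjoint semigroup) together with a de Finetti / mean-field analysis of $\tfrac1n I(\cdot\,|\,\mu^{\otimes n})$ on exchangeable measures, yielding a Laplace principle $\lim_n\tfrac1{nt}\log\|\mathrm P_{n,t}^{\,n(F\circ L_n)}\|_{L^2(\mu^{\otimes n})}=\sup_{m\in\P(E)}\{F(m)-I(m\,|\,\mu)\}$ for bounded Lipschitz $F$; specializing $F=\alpha\,W_2(\cdot,\mu)$ and using \eqref{enu: equivalence3} with the $1$-Lipschitz test function $\sqrt n\,W_2(L_n(\cdot),\mu)$ and multiplier $\lambda=\alpha\sqrt n$, then optimizing over $\alpha>0$, recovers $W_2I(\const)$.

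The main obstacle is this final step. It is delicate for two reasons: the $\ell^2$-metric on $E^n$ is not additive across coordinates, so the lower bound genuinely needs a concentration / law-of-large-numbers input — a crude first-moment estimate of $W_1(\mu^{\otimes n},\rho^{\otimes n})$ only reproduces $W_1I$, not $W_2I$ — and, in the operator-norm formulation, making rigorous the de Finetti passage $\tfrac1n I(\cdot\,|\,\mu^{\otimes n})\rightsquigarrow I(\cdot\,|\,\mu)$ is more subtle for Fisher information than the classical analogue for relative entropy that underlies Gozlan's argument. The remaining ingredients — the spectral identity for Feynman--Kac norms, the additivity and convexity of Fisher information, the tensorization of quadratic transport, and Kantorovich duality — are standard.
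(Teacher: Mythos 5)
Your proposal is correct, but the crux implication is closed by a genuinely different route from the paper's. The paper proves \eqref{enu: equivalence3} $\Rightarrow$ \eqref{enu: equivalence1}: it first establishes the Laplace-type lower bound of Theorem \ref{th:OpNorm-Sanov}, $\liminf_n \frac{1}{nt}\log\|\mathrm{P}_{n,t}^{nF\circ L_n}\|_{L^2(\mu^{\otimes n})}\ge \sup_\nu (F(\nu)-I(\nu\,|\,\mu))$, by restricting the variational formula of Lemma \ref{lemma: duality} to product measures $\nu^{\otimes n}$ (using $I(\nu^{\otimes n}\,|\,\mu^{\otimes n})=nI(\nu\,|\,\mu)$ and the law of large numbers), and then feeds in $F=W_2(\mu,\cdot)\wedge M$ against the hypothesis \eqref{eq: dimension-free concentration}. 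You instead prove \eqref{enu: equivalence2} $\Rightarrow$ \eqref{enu: equivalence1} directly on the measure side, via the transport estimate $\liminf_n \tfrac1n W_1^2(\mu^{\otimes n},\rho^{\otimes n})\ge W_2^2(\mu,\rho)$ obtained from the Kantorovich--Rubinstein witness $d_{\ell^2}(\cdot,A_n)$ with $A_n=\{W_2(L_n(\cdot),\mu)\le\varepsilon_n\}$; combined with $W_1^2(\mu^{\otimes n},\rho^{\otimes n})\le \const\, nI(\rho\,|\,\mu)$ this closes the cycle. Your argument is more elementary for this step: it bypasses the spectral identities of Lemmas \ref{lem: time invariance of spectral radius}--\ref{lemma: duality} entirely, needs only the additivity of $I$ on products, and would in fact prove the abstract implication (2) $\Rightarrow$ (1) of Theorem \ref{th:W2abstract} for an arbitrary tensorizing functional $\alpha$. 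What the paper's route buys is Theorem \ref{th:OpNorm-Sanov} itself (a Sanov-type principle of independent interest) and the uniform convex-duality treatment of $W_2I$ and $W_2H$ in Sections \ref{se:Sanov-abstract}--\ref{se:WIabstract}. Two minor remarks: your ``conceptual packaging'' overstates what is required --- the matching upper bound and the de Finetti analysis of $\tfrac1n I(\cdot\,|\,\mu^{\otimes n})$ (which need compact sub-level sets of $I(\cdot\,|\,\mu)$) are never used, only the trivial product-measure lower bound; and your proof of \eqref{enu: equivalence1} $\Rightarrow$ \eqref{enu: equivalence2} passes through the sequential conditionals $\bar\nu_i$ and a conditional Jensen inequality for $I(\cdot\,|\,\mu)$, whereas the cited tensorization \cite[Corollary 2.13]{guillin2009transportation} works directly with the all-but-one conditionals $\nu_{-k}$ and the corresponding $W_2$ tensorization inequality, so no convexity of $I(\cdot\,|\,\mu)$ is needed there.
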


The implication \eqref{enu: equivalence1} $\Rightarrow$ \eqref{enu: equivalence2} follows immediately from Jensen's inequality and \cite[Corollary 2.13]{guillin2009transportation}, which shows that if $\mu$ satisfies $W_2I(\const )$ then so does $\mu^{\otimes n}$. The equivalence \eqref{enu: equivalence2} $\Leftrightarrow$ \eqref{enu: equivalence3} $\Leftrightarrow$ \eqref{enu: equivalence4} is simply Theorem \ref{th:W1I} applied to $\mu^{\otimes n}$ for each $n$. Hence, our contribution is to complete the equivalence by showing that \eqref{enu: equivalence3} $\Rightarrow$ \eqref{enu: equivalence1}.

\begin{remark}
It is worth pointing out that the $W_1I$ inequality itself implies concentration inequalities for the product measure $\mu^{\otimes n}$ which are similar to \eqref{eq: dimension-free concentration} and \eqref{eq: conc-1}, but with a worse dependence on the dimension $n$ in comparison with the $W_2I$ inequality. For instance, suppose $\mu$ satisfies the $W_1I(\const )$ inequality. Then the tensorization argument of \cite[Corollary 2.13]{guillin2009transportation} shows that $\mu^{\otimes n}$ satisfies the $W_1I(n\const )$ inequality, with $E^n$ equipped with the $\ell^1$-metric instead of the $\ell^2$-metric. The implication \eqref{enu: original equivalence1} $\Rightarrow$ \eqref{enu: original equivalence3} from Theorem \ref{th:W1I} then implies that, for any $1$-Lipschitz function $f : E \to \R$,  $n \in \N$, and $\nu\in\P(E)$ such that $\mathrm{d}\nu\slash\mathrm{d}\mu\in L^{2}(\mu)$,
\begin{equation*}
\PP^n_{\nu}\left(\frac{1}{nt}\sum_{i=1}^n\int_{0}^{t}f(X^i_{s})\,\mathrm{d}s \geq \int_{E}f\,\mathrm{d}\mu+r\right)\leq\left\Vert \frac{\mathrm{d}\nu}{\mathrm{d}\mu^{\otimes n}}\right\Vert_{L^2(\mu^{\otimes n})}\exp\left(- \frac{t r^2}{\const}\right), \quad r,t > 0.
\end{equation*}
On the other hand, if $\mu$ satisfies the $W_2I(\const )$ inequality, then the exponent on the right-hand side improves to $-ntr^2/\const$.
\end{remark}

\subsection{A Laplace-type principle for Feynman-Kac semigroups}
 The proof of Theorem \ref{th:W2I-main}, given in Section \ref{sec: characterization}, makes use of a new Laplace-type principle for operator norms of Feynman-Kac semigroups, which is interesting in its own right. 
In the following, let $L_{n}:E^{n}\rightarrow\P(E)$ denote the empirical measure map, defined by
\begin{equation}
L_{n}(x_{1},\dots,x_{n}) \coloneqq \frac{1}{n}\sum_{k=1}^{n}\delta_{x_{k}}. \label{def:empmeas}
\end{equation}

\begin{theorem} \label{th:OpNorm-Sanov}
Let $t>0$.
Then, for any bounded lower semicontinuous function $F:\P(E)\rightarrow\R$,
\begin{equation}
\liminf_{n\rightarrow\infty}\frac{1}{nt}\log\big\| \mathrm{P}_{n,t}^{nF\circ L_{n}}\big\|_{L^2(\mu^{\otimes n})}\geq\sup_{\nu\in\P(E)}\left(F(\nu)-I(\nu \,|\, \mu)\right). \label{eq:OpNorm-lower}
\end{equation}
Suppose in addition that the sub-level sets of $I(\cdot\,|\,\mu)$ are compact. Then, for any bounded upper semicontinuous function $F:\P(E)\to\R$,
\begin{equation}
\limsup_{n\rightarrow\infty}\frac{1}{nt}\log\big\| \mathrm{P}_{n,t}^{nF\circ L_{n}}\big\|_{L^2(\mu^{\otimes n})}\leq\sup_{\nu\in\P(E)}\left(F(\nu)-I(\nu \,|\, \mu)\right). \label{eq:OpNorm-upper}
\end{equation}
\end{theorem}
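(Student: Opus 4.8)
The plan is to recognize the quantity $\frac1{nt}\log\|\mathrm{P}_{n,t}^{nF\circ L_n}\|_{L^2(\mu^{\otimes n})}$ as a Laplace-type functional that should, by a large-deviations heuristic, converge to $\sup_\nu(F(\nu)-I(\nu\,|\,\mu))$. The rate function $I(\cdot\,|\,\mu)$ here is exactly the one governing the large deviations of the empirical measure $L_n(X^1_t,\dots,X^n_t)$ of $n$ i.i.d.\ copies run for a \emph{fixed} time $t$ — this is where reversibility enters, since for reversible Markov processes the process-level / empirical-measure large deviation rate at fixed horizon is controlled by the Donsker--Varadhan-type functional whose one-time-marginal contraction is $t\,I(\cdot\,|\,\mu)$ (cf.\ the Feynman--Kac formulation in \cite{wu2000deviation}). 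So the statement is morally ``Varadhan's lemma for the $L^2$-operator norm.'' I would not try to invoke a black-box Sanov theorem directly, though, because the operator norm is a supremum over test functions $g$, not an expectation; instead I would use the variational (spectral) characterization of the norm.

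The first key step is a \emph{lower bound via a change of measure}. Fix $\nu\ll\mu$ with $f:=d\nu/d\mu$ and $\sqrt f\in\DD(\mathcal E)$, so $I(\nu\,|\,\mu)=\mathcal E(\sqrt f,\sqrt f)<\infty$. Take the test function $g=\sqrt f{}^{\otimes n}$ on $E^n$, which satisfies $g\ge 0$ and $\int g^2\,d\mu^{\otimes n}=1$, so $\|\mathrm{P}_{n,t}^{nF\circ L_n}\|_{L^2(\mu^{\otimes n})}\ge\|\mathrm{P}_{n,t}^{nF\circ L_n}g\|_{L^2(\mu^{\otimes n})}$. By symmetry and Jensen applied to $\log$, I would lower-bound $\|\mathrm{P}_{n,t}^{nF\circ L_n}g\|^2_{L^2}$ by $\exp$ of an expectation over the product path measure started from $\nu^{\otimes n}$: concretely,
\begin{align*}
\frac1{nt}\log\big\|\mathrm{P}_{n,t}^{nF\circ L_n}g\big\|^2_{L^2(\mu^{\otimes n})}
&\ge \frac1{nt}\,\E^n_{\nu^{\otimes n}}\!\left[2\!\int_0^t F(L_n(X^1_s,\dots,X^n_s))\,\mathrm ds + 2\log\frac{\sqrt f(X^1_t)\cdots\sqrt f(X^n_t)}{\sqrt{f(X^1_0)\cdots}}\right]\!\cdot\!\tfrac12 - (\text{error}).
\end{align*}
By the law of large numbers for the empirical measure of the i.i.d.\ coordinates under the $\nu$-marginal dynamics, $L_n(X^i_s)\to\mathrm{Law}(X_s)$ under $\PP_\nu$ for each $s$, and lower semicontinuity of $F$ together with bounded convergence pushes the time-average term down to $\int_0^t F(\mathrm{Law}(X_s))\,\mathrm ds\ge tF(\nu)-o(t)$ only if $\nu$ is the right object; to get exactly $tF(\nu)$ I would instead freeze the paths — use that $\mathrm{P}^{nF\circ L_n}_{n,t}g\ge \E^n_{\cdot}[g(X_t)\exp(ntF(L_n(X_0))-\,\text{osc}(F)\cdot n t\cdot o(1))]$ via continuity of $F\circ L_n$ along paths is too lossy, so cleaner is to combine the spectral identity $\|\mathrm P_{n,t}^{\phi}\|_{L^2}=e^{t\lambda_n(\phi)}$ with the variational formula $\lambda_n(\phi)=\sup\{\int\phi\,h^2\,d\mu^{\otimes n}-\mathcal E^{\oplus n}(h,h):\|h\|_{L^2}=1\}$, then plug $h=\sqrt f{}^{\otimes n}$ and $\phi=nF\circ L_n$ to get $\lambda_n(nF\circ L_n)\ge n\!\int F\circ L_n\,f^{\otimes n}d\mu^{\otimes n}-n\,\mathcal E(\sqrt f,\sqrt f)$, and finally $\int F\circ L_n\,f^{\otimes n}\,d\mu^{\otimes n}=\E_{\nu^{\otimes n}}[F(L_n)]\to F(\nu)$ by LLN and lower semicontinuity/boundedness of $F$. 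This yields $\liminf_n\frac1{nt}\log\|\cdot\|\ge F(\nu)-I(\nu\,|\,\mu)$ for every admissible $\nu$, hence \eqref{eq:OpNorm-lower} after taking the sup (the non-admissible $\nu$ contribute $-\infty$).

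The second, harder key step is the \emph{upper bound} under the compact-sublevel-set hypothesis. Here I would again use $\|\mathrm P_{n,t}^{nF\circ L_n}\|_{L^2}=e^{t\lambda_n(nF\circ L_n)}$ and the variational formula, so I must show $\sup_{\|h\|_{L^2(\mu^{\otimes n})}=1}\big(n\!\int F\circ L_n\,h^2\,d\mu^{\otimes n}-\mathcal E^{\oplus n}(h,h)\big)\le n\sup_\nu(F(\nu)-I(\nu\,|\,\mu))+o(n)$. The natural route is a \emph{tensorized super-additivity / projection argument for the sum-form Dirichlet form}: for $h\ge 0$ with $\|h\|_{L^2}=1$, interpret $h^2\,d\mu^{\otimes n}$ as a probability measure $\rho_n$ on $E^n$ with barycentric empirical-measure law, decompose $\mathcal E^{\oplus n}(h,h)$ coordinatewise as in \eqref{def:sumform}, and use the chain rule / sub-additivity of $I(\cdot\,|\,\mu)$ under marginalization together with convexity to bound $\int F\circ L_n\,d\rho_n - \frac1n\mathcal E^{\oplus n}(\sqrt{\cdot},\sqrt{\cdot})$ from above by $\sup_\nu(F(\nu)-I(\nu\,|\,\mu))$ up to error terms. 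The key analytic input is that the empirical measure $L_n$ pushed forward under any $\rho_n$ with bounded per-coordinate Fisher information concentrates near the set $\{I(\cdot\,|\,\mu)\le \text{const}\}$, which is compact by hypothesis; combined with upper semicontinuity and boundedness of $F$ and a compactness/finite-cover argument, the discrete sup over $h$ collapses to the continuum sup over $\nu$. I expect \textbf{this upper-bound step to be the main obstacle}, specifically the passage from the $n$-fold sum-form variational problem to the single-particle problem — one must handle the lack of exact tensorization of the nonlinear map $h\mapsto(F\circ L_n)(h^2)$ and control the cross terms, likely via a subadditivity estimate for $\lambda_n$ as a function of the potential together with the compactness of sublevel sets to rule out mass escaping to infinity (which is precisely why the hypothesis is needed only for the upper bound, exactly as in Varadhan's lemma).
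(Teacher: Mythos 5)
Your lower-bound argument, once you discard the abandoned ``Jensen on the log'' detour and settle on the spectral/variational route, is exactly the paper's proof: the identity $\frac1t\log\|\mathrm P_{n,t}^{\phi}\|_{L^2(\mu^{\otimes n})}=\sup_h\bigl(\int\phi\,h^2\,\mathrm d\mu^{\otimes n}-\mathcal E^{\oplus n}(h,h)\bigr)$ (equivalently, the convex duality of Lemma \ref{lemma: duality}), the product test function $h=\sqrt f^{\otimes n}$ (equivalently, the measure $\nu^{\otimes n}$ with $I(\nu^{\otimes n}\,|\,\mu^{\otimes n})=nI(\nu\,|\,\mu)$), and then the law of large numbers for $\nu^{\otimes n}\circ L_n^{-1}$ together with boundedness and lower semicontinuity of $F$. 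One small imprecision: lower semicontinuity gives only $\liminf_n\E_{\nu^{\otimes n}}[F(L_n)]\ge F(\nu)$, not convergence, but that is all you need.

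The upper bound, however, is a genuine gap, and you have correctly located it yourself: you describe a plan (``decompose $\mathcal E^{\oplus n}$ coordinatewise, use sub-additivity and convexity, a compactness/finite-cover argument'') but give no mechanism for collapsing the $n$-fold variational problem $\sup_{h}\bigl(n\int F\circ L_n\,h^2\,\mathrm d\mu^{\otimes n}-\mathcal E^{\oplus n}(h,h)\bigr)$ to the single-particle supremum; the measures $h^2\,\mathrm d\mu^{\otimes n}$ achieving the sup are in general far from product form, and ``control the cross terms, likely via a subadditivity estimate'' is precisely the hard step left unproved. The paper's resolution is different and worth internalizing: write $\rho_n(f)=\sup_\nu(\int f\,\mathrm d\nu-\alpha_n(\nu))$ with $\alpha_n=I(\cdot\,|\,\mu^{\otimes n})$ in its \emph{sum-form} (conditioning each coordinate on all the others, Lemma \ref{le:In-tensorized}), and compare it with the \emph{sequential} tensorization $\widehat\alpha_n$ built from the chain-rule disintegration $\nu_{k-1,k}$. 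Since $\nu_{k-1,k}(X_1,\dots,X_{k-1})=\E[\nu_{-k}(X_{-k})\,|\,X_1,\dots,X_{k-1}]$ and $I(\cdot\,|\,\mu)$ is convex and lower semicontinuous, conditional Jensen gives $\widehat\alpha_n\le\alpha_n$, hence $\rho_n\le\widehat\rho_n$, and the upper bound then follows by citing the known Sanov-type upper bound of \cite[Theorem 1.1]{lacker_2020} for $\widehat\rho_n$ (which is where compactness of the sub-level sets is actually used). Without this comparison of tensorizations, or an equivalent device, your sketch would amount to reproving that theorem from scratch, which the covering argument as stated does not accomplish.
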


Only the lower bound \eqref{eq:OpNorm-lower} is needed for the proof of Theorem \ref{th:W2I-main}.
The proof of \eqref{eq:OpNorm-lower} is based on the fact that $I(\cdot\,|\,\mu^{\otimes n})$ and $f \mapsto \tfrac{1}{t}\log \big\|\mathrm{P}_{n,t}^f\big\|$ are convex conjugates, along with a \emph{chain rule} formula relating $I(\cdot\,|\,\mu^{\otimes n})$ with $I(\cdot\,|\,\mu)$.
The main idea of our proof of \eqref{enu: equivalence3} $\Rightarrow$ \eqref{enu: equivalence1} in Theorem \ref{th:W2I-main} is to apply \eqref{eq:OpNorm-lower} with $F = W_2(\cdot,\mu) \wedge M$, for $M > 0$ which we later send to infinity.
Essentially, the inequality \eqref{eq:OpNorm-lower} plays the same role for us that the lower bound of Sanov's theorem plays in the proof of Gozlan's theorem (recalled in Theorem \ref{th:W2H} below).

The matching upper bound \eqref{eq:OpNorm-upper} is of independent interest but is not needed for the proof of Theorem \ref{th:W2I-main}.
We derive it in Section \ref{se:Sanov-abstract} from a general Sanov-type theorem involving a tensorization $\alpha_n : \P(E^n) \to (-\infty,\infty]$ of an abstract functional $\alpha : \P(E) \to (-\infty,\infty]$ of a probability measure, inspired by recent work of the first author \cite{lacker_2020}.
In this framework, we show in Section \ref{se:WIabstract} that $W_2^2(\mu,\cdot) \le \alpha$ if and only if $W_1^2(\mu^{\otimes n},\cdot) \le \alpha_n$ for every $n \in \N$. Combined with a dual form of the latter inequality, this generalizes the implications (1) $\Leftrightarrow$ (2) $\Leftrightarrow$ (3) in both Theorem \ref{th:W2I-main} and Gozlan's Theorem \ref{th:W2H} below.
See Sections \ref{se:Sanov-abstract} and \ref{se:WIabstract} for full details.

\begin{remark} \label{re:donskervaradhan}
Theorem \ref{th:OpNorm-Sanov} is very different from the usual large deviation principle for the occupation measure of the Markov process (see \cite{donsker1975asymptotic,donsker1976asymptotic,wu2000deviation}), despite sharing the same ``rate function" $I(\cdot\,|\,\mu)$. The usual large deviation principle, combined with Varadhan's lemma, takes the form
\begin{align*}
\lim_{T\to\infty} \frac{1}{T}\log\E_x\left[\exp\left( TF\left( \frac{1}{T}\int_0^T \delta_{X_t}\,\mathrm{d}t\right)\right)\right] = \sup_{\nu\in\P(E)}\left(F(\nu)-I(\nu \,|\, \mu)\right),
\end{align*}
for bounded continuous $F : \P(E) \to \R$.
It is not clear if there is a deeper connection between Theorem \ref{th:OpNorm-Sanov} and this large deviation principle, but we will make no use of the latter.
\end{remark}

\begin{remark}
Note that the upper bound of Theorem \ref{th:OpNorm-Sanov} requires the additional assumption of compactness of the sub-level sets of $I(\cdot\,|\,\mu)$. They are always closed, because $I(\cdot\,|\,\mu)$ is well known to be lower semicontinuous. Hence, the additional assumption holds automatically if $E$ is compact. In the non-compact case, there are tractable sufficient conditions, such the hypotheses of \cite[Lemma 7.1]{donsker1976asymptotic}, or the uniform integrability of the semigroup as in \cite{wu2000uniformly}.
\end{remark}

\subsection{Related literature and $W_pH$ inequalities}

Transportation-information inequalities were introduced in the papers  \cite{guillin2009transportation,guillin2009transportation2}, which developed several necessary and sufficient conditions as well as connections with other functional inequalities. The most satisfying results are in the context of Example \ref{ex:diffusion}: $W_2I$ is weaker than a log-Sobolev inequality but stronger than a Poincar\'e inequality \cite[Proposition 2.9]{guillin2009transportation}. In addition, $W_pI$ implies the corresponding transportation-entropy inequality $W_pH$ (defined below), for $p=1,2$ \cite[Theorems 2.1 and 2.4]{guillin2009transportation2}. 
More recently, $W_2I$ was characterized in terms of a Lyapunov condition \cite[Theorem 1.3]{liu2017new}, though again only in the context of Example \ref{ex:diffusion}, and without explicit constants.
In full generality, a characterization of the $W_2I$ inequality in terms of inf-convolution inequalities was given in \cite[Corollary 2.5]{guillin2009transportation}.

Our main result, Theorem \ref{th:W2I-main}, is best understood in comparison with Gozlan's characterization of Talagrand's inequality in terms of dimension-free concentration \cite{gozlan2009characterization}. To explain this, we first recall the basics of transportation-entropy inequalities, referring to the survey \cite{gozlan2010transport} for a more comprehensive overview.
The relative entropy between probability measures $\nu$ and $\mu$ is defined as usual by 
\begin{equation}
H(\nu\,|\,\mu)\coloneqq
\begin{cases}
\int_{E} \frac{\mathrm{d}\nu}{\mathrm{d}\mu}\log\frac{\mathrm{d}\nu}{\mathrm{d}\mu} \, \mathrm{d}\mu, & \text{if }\nu \ll \mu,\\
+\infty, & \text{otherwise}.
\end{cases}
\end{equation}
For $C>0$ and $p\geq 1$, we say that \emph{$\mu$ satisfies the $W_pH(C)$ inequality} if 
\begin{equation}
W_{p}^2(\mu,\nu) \leq  C  H(\nu \,|\, \mu), \quad \text{for all } \nu \in \P(E). \label{def:WpH}
\end{equation}
Inequalities of this form gained prominence from the work of Marton \cite{marton1996bounding} and Talagrand \cite{talagrand1996transportation}, with a number of subsequent contributions further clarifying their precise role in characterizing concentration properties. We first mention a famous dual characterization due to Bobkov and G{\"o}tze:

\begin{theorem} \label{th:W1H}   \cite[Theorem 1.3]{bobkov1999exponential}
Let $C>0$. The following are equivalent:
\begin{enumerate}
\item \label{enu: T1H equivalence 1} $\mu$ satisfies the  $W_{1}H(C)$ inequality. 
\item \label{enu: T1H equivalence 2} For every 1-Lipschitz function $f : E \rightarrow \R$,  
\begin{equation*}
\log \int_E e^{\lambda f} \mathrm{d} \mu \leq  \lambda \int_E f \mathrm{d} \mu + \frac{C \lambda^2}{4}, \quad \text{ for all } \lambda \in \R.
\end{equation*}
\end{enumerate}
\end{theorem}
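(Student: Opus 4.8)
The plan is to deduce both implications from two classical duality identities: the Kantorovich--Rubinstein formula
\[
W_1(\mu,\nu) = \sup\Big\{\textstyle\int_E f\,\mathrm{d}\nu - \int_E f\,\mathrm{d}\mu \,:\, f \text{ bounded and } 1\text{-Lipschitz}\Big\},
\]
and the Donsker--Varadhan (Gibbs) variational principle, namely that for every bounded measurable $g:E\to\R$,
\[
\log\int_E e^{g}\,\mathrm{d}\mu = \sup_{\nu\in\P(E)}\Big(\textstyle\int_E g\,\mathrm{d}\nu - H(\nu\,|\,\mu)\Big),
\]
equivalently the pointwise bound $\int_E g\,\mathrm{d}\nu \le H(\nu\,|\,\mu) + \log\int_E e^{g}\,\mathrm{d}\mu$ for all $\nu \in \P(E)$. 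With these in hand, each direction reduces to a one-parameter optimization.

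For \eqref{enu: T1H equivalence 2} $\Rightarrow$ \eqref{enu: T1H equivalence 1}, I would fix a bounded $1$-Lipschitz $f$ and a $\nu$ with $H(\nu\,|\,\mu)<\infty$ (there is nothing to prove otherwise), apply the entropy bound to $g=\lambda f$ with $\lambda>0$, and use the hypothesis to get $\lambda\int_E f\,\mathrm{d}\nu \le H(\nu\,|\,\mu) + \lambda\int_E f\,\mathrm{d}\mu + C\lambda^2/4$. Dividing by $\lambda$ and minimizing the right-hand side over $\lambda>0$ (optimal choice $\lambda = 2\sqrt{H(\nu\,|\,\mu)/C}$, or $\lambda\to\infty$ when $H(\nu\,|\,\mu)=0$) yields $\int_E f\,\mathrm{d}\nu - \int_E f\,\mathrm{d}\mu \le \sqrt{C\,H(\nu\,|\,\mu)}$. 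Taking the supremum over bounded $1$-Lipschitz $f$ and invoking Kantorovich--Rubinstein gives $W_1(\mu,\nu)\le\sqrt{C\,H(\nu\,|\,\mu)}$, which is exactly the $W_1H(C)$ inequality.

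For the converse \eqref{enu: T1H equivalence 1} $\Rightarrow$ \eqref{enu: T1H equivalence 2}, I would first reduce to $\lambda\ge0$ (replace $f$ by $-f$, the case $\lambda=0$ being trivial) and to bounded $f$ (truncate $f$ at levels $\pm M$, which preserves the $1$-Lipschitz property, and let $M\to\infty$ using monotone/dominated convergence on both sides of the claimed inequality). Then, for any $\nu$ with $H(\nu\,|\,\mu)<\infty$, combining the Kantorovich--Rubinstein bound $\int_E f\,\mathrm{d}\nu-\int_E f\,\mathrm{d}\mu\le W_1(\mu,\nu)$ with the hypothesis $W_1(\mu,\nu)\le\sqrt{C\,H(\nu\,|\,\mu)}$ gives $\lambda\int_E f\,\mathrm{d}\nu - H(\nu\,|\,\mu) \le \lambda\int_E f\,\mathrm{d}\mu + \sup_{h\ge0}\big(\lambda\sqrt{Ch}-h\big) = \lambda\int_E f\,\mathrm{d}\mu + C\lambda^2/4$, the supremum being attained at $h=C\lambda^2/4$. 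Taking the supremum over $\nu$ and using the Donsker--Varadhan formula for $g=\lambda f$ concludes. The argument has no essential obstacle --- it is a short convex-duality computation --- so the only care needed lies in the two routine reductions, namely passing from bounded to general Lipschitz functions (ensuring $\int_E f\,\mathrm{d}\mu$ is well defined and finite and that the two duality formulas apply verbatim) and bookkeeping the sign of $\lambda$; alternatively, one may simply cite \cite{bobkov1999exponential} and remark that the equivalence is classical.
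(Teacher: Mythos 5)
Your argument is correct and is essentially the standard Bobkov--G\"otze convex-duality proof: Kantorovich--Rubinstein duality for $W_1$ combined with the Donsker--Varadhan variational formula for $\log\int e^{g}\,\mathrm{d}\mu$, plus the elementary conjugacy $\sup_{\lambda\ge 0}(\lambda x - C\lambda^2/4)=x^2/C$. This is the same route the paper takes in its generalization (Theorem \ref{th:W1abstract}, where $\rho_1(\lambda f)$ specializes to $\log\int e^{\lambda f}\,\mathrm{d}\mu$ when $\alpha=H(\cdot\,|\,\mu)$), so no further comment is needed beyond the routine bounded-to-unbounded truncation you already flag.
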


This is the $W_1H$ analogue of the $W_1I$ characterization (1) $\Leftrightarrow$ (2) stated in Theorem \ref{th:W1I}.
There are several other equivalent formulations possible in Theorem \ref{th:W1H}, at least if one is willing to change the constant (by a universal factor). For instance, $\mu$ satisfies the $W_{1}H(C)$ inequality for some $C$ if and only if
	{\itshape
		\begin{enumerate}
			\item[(3)] There exists $C > 0$ such that, for every $1$-Lipschitz function $f : E \to \R$,
			\begin{align*}
				\mu\left( f - \int_E f \, \mathrm{d} \mu > r\right) \le e^{-Cr^2}, \quad \text{ for all } r > 0.
			\end{align*}
		\end{enumerate}
	}
This is the analogue of  \eqref{enu: original equivalence3} of Theorem \ref{th:W1I}.
The $W_1H$ inequality thus encodes concentration properties of the underlying measure, whereas the $W_1I$ inequality encodes concentration properties for time-averages of a Markov process around its equilibrium.

Turning now to the quadratic inequality, it has been known since the work of Marton \cite{marton1996bounding} and Talagrand \cite{talagrand1996transportation} that $W_2H$ tensorizes: If $\mu$ satisfies $W_2H(C)$ then so does the product measure $\mu^{\otimes n}$ for any $n$. Since $W_2H(C)$ implies $W_1H(C)$, this yields any of the above expressions of concentration for $\mu^{\otimes n}$, with a dimension-free constant $C$. 
Gozlan proved a remarkable converse to this statement in \cite[Theorem 1.3]{gozlan2009characterization}, though below we quote a somewhat different formulation. Recall that we equip $E^n$ with the $\ell^2$-metric defined in \eqref{def:ell2metric}.

\begin{theorem}\label{th:W2H} \cite[Theorem 9.6.4]{bakry2013analysis}, \cite[Theorem 4.31]{vanprobability}
Let $C>0$. The following are equivalent:
\begin{enumerate}
\item \label{enu: T2H equivalence 1} $\mu$ satisfies the $W_2H(C)$ inequality. 
\item \label{enu: T2H equivalence 2} For each $n \in \N$, $\mu^{\otimes n}$ satisfies the $W_1H(C)$ inequality.
\item \label{enu: T2H equivalence 3} For each $n \in \N$ and 1-Lipschitz function $f : E^n \rightarrow \R$,  
\begin{equation*}
\log \int_{E^n} e^{\lambda f} \mathrm{d} \mu^{\otimes n} \leq  \lambda \int_{E^n} f \, \mathrm{d} \mu^{\otimes n} + \frac{C \lambda^2}{4}, \quad \text{ for all } \lambda \in \R.
\end{equation*}
\item \label{enu: T2H equivalence 4} There exists $K > 0$ such that, for every $n \in \N$ and 1-Lipschitz function $f : E^n \rightarrow \R$,  
\begin{align*}
\mu^{\otimes n}\left(f - \int_{E^n}f\,\mathrm{d}\mu^{\otimes n} > r\right) \le K \exp\left(-\frac{r^2}{C}\right), \quad \text{ for all } r > 0.
\end{align*}
\end{enumerate}
\end{theorem}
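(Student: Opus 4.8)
The plan is to establish the cycle of implications \eqref{enu: T2H equivalence 1} $\Rightarrow$ \eqref{enu: T2H equivalence 2} $\Leftrightarrow$ \eqref{enu: T2H equivalence 3} $\Rightarrow$ \eqref{enu: T2H equivalence 4} $\Rightarrow$ \eqref{enu: T2H equivalence 1}, mirroring the structure of Theorem~\ref{th:W2I-main}. The equivalence \eqref{enu: T2H equivalence 2} $\Leftrightarrow$ \eqref{enu: T2H equivalence 3} is just the Bobkov--G\"otze duality (Theorem~\ref{th:W1H}) applied to $\mu^{\otimes n}$ for each $n$. The implication \eqref{enu: T2H equivalence 1} $\Rightarrow$ \eqref{enu: T2H equivalence 2} is the classical Marton--Talagrand tensorization of the quadratic transportation--entropy inequality---if $\mu$ satisfies $W_2H(C)$ then so does $\mu^{\otimes n}$ on $(E^n,\ell^2)$---combined with the pointwise bound $W_1\le W_2$ (Cauchy--Schwarz on an optimal quadratic coupling). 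For \eqref{enu: T2H equivalence 3} $\Rightarrow$ \eqref{enu: T2H equivalence 4}, Markov's inequality together with \eqref{enu: T2H equivalence 3} gives, for any $1$-Lipschitz $f\colon E^n\to\R$ and $r,\lambda>0$,
\[
\mu^{\otimes n}\Bigl(f-\int_{E^n}f\,\mathrm{d}\mu^{\otimes n}>r\Bigr)\le\exp\Bigl(-\lambda r+\frac{C\lambda^{2}}{4}\Bigr),
\]
and optimizing over $\lambda$ yields \eqref{enu: T2H equivalence 4} with $K=1$. Thus the real content, exactly as in Theorem~\ref{th:W2I-main}, lies in the reverse implication \eqref{enu: T2H equivalence 4} $\Rightarrow$ \eqref{enu: T2H equivalence 1}, in which Sanov's theorem will play the part that the Laplace-type principle of Theorem~\ref{th:OpNorm-Sanov} plays there.

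For \eqref{enu: T2H equivalence 4} $\Rightarrow$ \eqref{enu: T2H equivalence 1} we first record that $\mu$ has sub-Gaussian tails: applying \eqref{enu: T2H equivalence 4} with $n=1$ to truncations of $x\mapsto d(x,x_0)$ and using the standard equivalence between Gaussian concentration and exponential square-integrability gives $\int_E e^{\lambda d^{2}(x,x_0)}\,\mu(\mathrm{d}x)<\infty$ for small $\lambda>0$; in particular $\int_E d^{2}(x,x_0)\,\mu(\mathrm{d}x)<\infty$, and the Young/entropy inequality then yields $\int_E d^{2}(x,x_0)\,\nu(\mathrm{d}x)<\infty$, hence $W_2(\mu,\nu)<\infty$, whenever $H(\nu\,|\,\mu)<\infty$. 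Fix such a $\nu$ and set $D\coloneqq W_2(\mu,\nu)$; the goal is $D^{2}\le CH(\nu\,|\,\mu)$. With $L_n$ the empirical measure map \eqref{def:empmeas}, consider
\[
f_n(x_1,\dots,x_n)\coloneqq\sqrt{n}\,W_2\bigl(L_n(x_1,\dots,x_n),\nu\bigr),\qquad (x_1,\dots,x_n)\in E^{n}.
\]
Because the coordinatewise (``diagonal'') coupling gives $W_2(L_n(x),L_n(x'))\le n^{-1/2}\|x-x'\|_{\ell^{2}}$, the triangle inequality for $W_2$ makes $f_n$ a $1$-Lipschitz function on $E^{n}$ for the $\ell^{2}$-metric \eqref{def:ell2metric}; the factor $\sqrt{n}$ is exactly what this $1$-Lipschitzness costs.

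The heart of the argument is to play the dimension-free concentration of \eqref{enu: T2H equivalence 4} against a Sanov-type lower bound for the probability that $L_n$ is $W_2$-close to $\nu$. On the one hand, applying \eqref{enu: T2H equivalence 4} to the $1$-Lipschitz function $-f_n$ gives, for all $r\ge0$,
\[
\mu^{\otimes n}\bigl(\sqrt{n}\,W_2(L_n,\nu)<\bar f_n-r\bigr)\le K\exp\Bigl(-\frac{r^{2}}{C}\Bigr),\qquad \bar f_n\coloneqq\int_{E^{n}}f_n\,\mathrm{d}\mu^{\otimes n}.
\]
On the other hand, the lower bound in Sanov's theorem---or, to bypass the fact that $W_2$-balls need not be weakly open, a direct change-of-measure estimate using an approximation of $\nu$ by measures of bounded density relative to $\mu$---gives, for each fixed $\varepsilon>0$,
\[
\liminf_{n\to\infty}\frac{1}{n}\log\mu^{\otimes n}\bigl(W_2(L_n,\nu)<\varepsilon\bigr)\ \ge\ -H(\nu\,|\,\mu).
\]
Feeding the second bound into the first with the choice $r=\bar f_n-\varepsilon\sqrt{n}$ (eventually positive by the next sentence), taking logarithms, dividing by $n$, and using that $K$ is dimension-free, one arrives at $\bigl(\bar f_n/\sqrt{n}-\varepsilon\bigr)^{2}/C\le H(\nu\,|\,\mu)+o(1)$. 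Finally, the law of large numbers for empirical measures in $W_2$ gives $\int_{E^{n}}W_2(L_n,\mu)\,\mathrm{d}\mu^{\otimes n}\to0$ (using $\int d^{2}\,\mathrm{d}\mu<\infty$ from the previous paragraph), so by the triangle inequality $\bar f_n/\sqrt{n}=\int_{E^{n}}W_2(L_n,\nu)\,\mathrm{d}\mu^{\otimes n}\to W_2(\mu,\nu)=D$. Letting $n\to\infty$ and then $\varepsilon\downarrow0$ gives $D^{2}\le CH(\nu\,|\,\mu)$, which is \eqref{enu: T2H equivalence 1}. One may instead work throughout with the bounded functional $W_2(\cdot,\nu)\wedge M$, i.e.\ replace $f_n$ by $\sqrt{n}\,(W_2(L_n,\nu)\wedge M)$, and send $M\to\infty$ at the very end; this removes the need for the law of large numbers here and is the exact analogue of the truncation used in the proof of Theorem~\ref{th:W2I-main}.

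The main obstacle is this last implication, and within it the marriage of the two one-sided estimates: the concentration hypothesis controls the \emph{lower} tail of $\sqrt{n}\,W_2(L_n,\nu)$ about its mean at Gaussian rate in $r$, while Sanov's theorem prevents that lower tail from decaying faster than $e^{-nH(\nu\,|\,\mu)}$, and matching the two forces the normalized mean---which converges to $W_2(\mu,\nu)$---against $\sqrt{CH(\nu\,|\,\mu)}$. The technical points one must watch are: the $1$-Lipschitz bound for $f_n$ on $(E^{n},\ell^{2})$; the validity of the Sanov lower bound for the $W_2$-open set $\{W_2(\cdot,\nu)<\varepsilon\}$ rather than a weakly open set; the integrability facts ($H(\nu\,|\,\mu)<\infty\Rightarrow W_2(\mu,\nu)<\infty$, and convergence of $\int W_2(L_n,\nu)\,\mathrm{d}\mu^{\otimes n}$ to $W_2(\mu,\nu)$), which rest on the sub-Gaussianity of $\mu$ extracted from \eqref{enu: T2H equivalence 4}; and the dimension-independence of the constant $K$, which is what makes $n^{-1}\log K\to0$ and hence keeps the resulting inequality dimension-free.
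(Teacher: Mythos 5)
Your proof is correct, and the key implication (4) $\Rightarrow$ (1) is exactly Gozlan's original argument (the one reproduced in the sources the paper cites for this theorem); note that the paper itself does not prove Theorem \ref{th:W2H} but quotes it, so there is no in-paper proof to match against. What the paper does supply is an alternative route to part of the equivalence: Theorem \ref{th:W2abstract} with $\alpha = H(\cdot\,|\,\mu)$ (so that $\widehat\alpha_n = H(\cdot\,|\,\mu^{\otimes n})$ and $\widehat\rho_n(f)=\log\int e^f\,\mathrm{d}\mu^{\otimes n}$ by the chain rule and the Gibbs variational formula) recovers (1) $\Leftrightarrow$ (2) $\Leftrightarrow$ (3). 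There the hard step is (3) $\Rightarrow$ (1) rather than (4) $\Rightarrow$ (1), and it is run in Laplace form: one applies the exponential-moment bound to $\lambda\sqrt{n}\,(W_2(\mu,\cdot)\wedge M)\circ L_n$ and invokes the Laplace-principle lower bound of Theorem \ref{th:Sanov-Lacker} for the bounded lower semicontinuous functional $F=W_2(\mu,\cdot)\wedge M$, centered at $\mu$. That formulation sidesteps the two technical points you rightly flag in your version: it needs no set-form Sanov lower bound for $W_2$-balls around $\nu$ (hence no change-of-measure or bounded-density approximation, only Portmanteau for lower semicontinuous $F$), and the truncation at $M$ replaces the $L^1$ law of large numbers for $\int W_2(L_n,\nu)\,\mathrm{d}\mu^{\otimes n}$. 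Your route, by contrast, passes through the genuinely weaker-looking hypothesis (4) (concentration around the mean with an arbitrary dimension-free prefactor $K$), which the Laplace-form argument does not directly accommodate; the price is precisely the care you describe with the Sanov lower bound on $\{W_2(\cdot,\nu)<\varepsilon\}$ and with extracting sub-Gaussian integrability of $\mu$ from (4), both of which you handle correctly.
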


The parallels with our Theorem \ref{th:W2I-main} should be clear.
The implications \eqref{enu: T2H equivalence 1} $\Rightarrow$ (\ref{enu: T2H equivalence 2}) $\Leftrightarrow$ (\ref{enu: T2H equivalence 3}) $\Leftrightarrow$ (\ref{enu: T2H equivalence 4}) in Theorem \ref{th:W2H}  were known (up to a universal change in the constant for (\ref{enu: T2H equivalence 4})), with Gozlan's result completing the equivalence.
Our Theorem  \ref{th:W2I-main} fills a gap in the literature by completing this analogy between $W_2H$ and $W_2I$.

\subsection{Organization of the paper} 
The rest of the note is organized as follows. In Section \ref{sec: characterization} we prove Theorem \ref{th:W2I-main} after first proving the Laplace-type lower bound of Theorem \ref{th:OpNorm-Sanov}. In Section \ref{se:Sanov-abstract} we develop an abstract Sanov-type theorem and use it to prove the upper bound in Theorem \ref{th:OpNorm-Sanov}. Finally, in the abstract setting, Section \ref{se:WIabstract} generalizes (some of) the characterizations of $W_2I$ and $W_2H$ given in Theorems \ref{th:W2I-main} and \ref{th:W2H}.

\section{The characterization of $W_2I$} \label{sec: characterization}

This section is devoted to the proof of Theorem \ref{th:W2I-main}.
We first collect a few well-known properties of the Fisher information and Feynman-Kac semigroups.
Recall the definitions of $\mathcal{E}^{\oplus n}$ and $I(\cdot\,|\,\mu^{\otimes n})$ from \eqref{def:sumform} and \eqref{def:In}.

\begin{lemma}
\label{lem: time invariance of spectral radius}   For every $t > 0$, $n \in \N$, and $f\in B(E^n)$, the Feynman-Kac semigroup defined in \eqref{eq: FK semigroup} satisfies
\begin{equation*}
\frac{1}{t}\log\big\| \mathrm{P}_{n,t}^{f}\big\| _{L^2(\mu^{\otimes n})}=\sup\left\{ \int_{E^n} fg^{2}\mathrm{d}\mu^{\otimes n} - \mathcal{E}^{\oplus n}(g,g) : g\in\DD(\mathcal{E}^{\oplus n}), \int_{E^n}  g^{2} \mathrm{d} \mu^{\otimes n} =1 \right\}.
\end{equation*}
\end{lemma}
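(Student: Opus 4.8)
The plan is to identify $\tfrac1t\log\big\|\mathrm{P}_{n,t}^{f}\big\|_{L^2(\mu^{\otimes n})}$ with the top of the spectrum of the self-adjoint operator generating $(\mathrm{P}_{n,t}^f)_{t\ge0}$ — an object that is manifestly independent of $t$ — and then to read off the claimed identity from the Rayleigh--Ritz variational principle. Write $\LL^{\oplus n}$ for the generator of the product semigroup on $L^2(\mu^{\otimes n})$ described before \eqref{def:sumform}; by reversibility of $\mu$, hence of $\mu^{\otimes n}$, it is self-adjoint and nonpositive, with associated Dirichlet form $\mathcal{E}^{\oplus n}$ and form domain $\DD(\mathcal{E}^{\oplus n})=\DD(\sqrt{-\LL^{\oplus n}})$.

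\emph{Structure of the Feynman--Kac semigroup.} Since $f\in B(E^n)$, multiplication by $f$ is a bounded self-adjoint operator on $L^2(\mu^{\otimes n})$, so $A:=\LL^{\oplus n}+f$, with domain $\DD(\LL^{\oplus n})$, is self-adjoint and bounded above by $\|f\|_\infty$. By the bounded-perturbation theorem it generates a strongly continuous semigroup, and via the Trotter product formula (or the Duhamel/Feynman--Kac expansion) that semigroup is $(\mathrm{P}_{n,t}^f)_{t\ge0}$. Each $\mathrm{P}_{n,t}^f$ is bounded, since $\|\mathrm{P}_{n,t}^f g\|_{L^2}\le e^{t\|f\|_\infty}\|\mathrm{P}_{n,t}|g|\|_{L^2}\le e^{t\|f\|_\infty}\|g\|_{L^2}$; it is self-adjoint (reverse the time-direction of the path inside the Feynman--Kac expectation and invoke reversibility of $\mu^{\otimes n}$); and it is positivity-preserving, so that $|\mathrm{P}_{n,t}^f g|\le \mathrm{P}_{n,t}^f|g|$ and hence the ``positive'' operator norm of \eqref{eq: operator norm def} coincides with the usual operator norm. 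All of this is classical and, for $n=1$, is already recorded in the paragraph following \eqref{eq: op norm}; the product structure changes nothing.

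\emph{Spectral radius, time invariance, and the variational formula.} Put $\lambda^\ast:=\sup\mathrm{spec}(A)\le\|f\|_\infty<\infty$. The spectral resolution $A=\int_{-\infty}^{\lambda^\ast}\lambda\,dE_\lambda$ gives $\mathrm{P}_{n,t}^f=e^{tA}=\int_{-\infty}^{\lambda^\ast}e^{t\lambda}\,dE_\lambda$, so $e^{tA}$ is a bounded positive self-adjoint operator with $\big\|e^{tA}\big\|_{L^2(\mu^{\otimes n})}=\sup\mathrm{spec}(e^{tA})=e^{t\lambda^\ast}$ for every $t>0$; therefore $\tfrac1t\log\big\|\mathrm{P}_{n,t}^f\big\|_{L^2(\mu^{\otimes n})}=\lambda^\ast$, independent of $t$. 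By the Rayleigh--Ritz characterization of the top of the spectrum of a self-adjoint operator bounded above, $\lambda^\ast=\sup\{\langle Ag,g\rangle:\ g\in\DD(A),\ \|g\|_{L^2(\mu^{\otimes n})}=1\}$, and for $g\in\DD(\LL^{\oplus n})$ one has $\langle Ag,g\rangle=-\mathcal{E}^{\oplus n}(g,g)+\int_{E^n}fg^2\,\mathrm{d}\mu^{\otimes n}$ by \eqref{def:sumform}. Since $f$ is bounded, the quadratic form $q(g):=-\mathcal{E}^{\oplus n}(g,g)+\int_{E^n} fg^2\,\mathrm{d}\mu^{\otimes n}$ is a bounded perturbation of $-\mathcal{E}^{\oplus n}$, so by the KLMN theorem its form domain is still $\DD(\mathcal{E}^{\oplus n})$ and $q$ is the form associated with $A$; the min--max principle for closed forms then lets us take the supremum over $\DD(\mathcal{E}^{\oplus n})$ rather than $\DD(A)$ without changing its value. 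Combining these observations yields the asserted identity.

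\emph{Main obstacle.} The only genuine care needed is bookkeeping: confirming that $(\mathrm{P}_{n,t}^f)$ is \emph{exactly} the self-adjoint semigroup generated by $\LL^{\oplus n}+f$ (not merely a semigroup agreeing with it on a core), and that the supremum may legitimately be enlarged from $\DD(\LL^{\oplus n})$ to the form domain $\DD(\mathcal{E}^{\oplus n})$. Both facts are standard spectral theory (see, e.g., Reed--Simon), and since $f$ is a bounded — though possibly discontinuous — multiplier on $L^2(\mu^{\otimes n})$, the perturbation arguments apply verbatim. Given that the $n=1$ version of the statement is already taken for granted in the paper, these points can reasonably be dispatched by citation, with a one-line remark that tensorization over the $n$ coordinates is immaterial.
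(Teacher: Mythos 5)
Your proposal is correct and follows essentially the same route the paper indicates: identify $(\mathrm{P}_{n,t}^f)$ with the self-adjoint semigroup $e^{t(\LL^{\oplus n}+f)}$, use the spectral theorem and functional calculus to equate $\tfrac1t\log\|\mathrm{P}_{n,t}^f\|_{L^2(\mu^{\otimes n})}$ with the top of the spectrum, and pass from the generator domain to the form domain $\DD(\mathcal{E}^{\oplus n})$ via the fact that the former is a form core. The only cosmetic difference is that the paper obtains the inequality $(\le)$ separately via the Lumer--Phillips theorem (which does not require reversibility) and only the converse via spectral theory, whereas you get both directions at once from the spectral theorem under the standing reversibility assumption.
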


This result can be found in  \cite[Lemma 6.1]{guillin2009transportation} and seems to be folklore. The inequality $( \le )$ was proved in \cite[Proof of Theorem 1, Case 1]{wu2000deviation} via the Lumer-Philips theorem and holds even without assuming reversibility. The opposite inequality can be obtained in the reversible case by applying the spectral theorem and functional calculus, along with the fact that $\DD(\mathcal{E}^{\oplus n}) $ is the completion of the domain of the infinitesimal generator of $(\mathrm{P}^f_{n,t})_{t \ge 0}$ with respect to the Dirichlet norm \cite[Theorem 3.1.1]{fukushima2010dirichlet}.

\begin{lemma} \label{lemma: duality}
For every $t > 0$, $n \in \N$, and $f\in B(E^n)$, the following variational formula holds:
\begin{align}
\frac{1}{t}\log\big\| \mathrm{P}_{n,t}^{f}\big\|_{L^2(\mu^{\otimes n})} & =\sup_{\nu\in\P(E^n)}\left(\int_{E^n}f\,\mathrm{d}\nu-I(\nu\,|\,\mu^{\otimes n})\right).\label{eq: variational1}
\end{align}
\end{lemma}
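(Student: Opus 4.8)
The plan is to derive \eqref{eq: variational1} from Lemma \ref{lem: time invariance of spectral radius} by showing that the supremum over normalized $g \in \DD(\mathcal{E}^{\oplus n})$ appearing there is the same as the supremum over $\nu \in \P(E^n)$ in \eqref{eq: variational1}, via the substitution $f = \mathrm{d}\nu/\mathrm{d}\mu^{\otimes n}$ and $g = \sqrt{f}$. Recall from \eqref{def:In} that $I(\nu \,|\, \mu^{\otimes n}) = \mathcal{E}^{\oplus n}(\sqrt{f},\sqrt{f})$ whenever $\nu = f\mu^{\otimes n} \ll \mu^{\otimes n}$ with $\sqrt{f} \in \DD(\mathcal{E}^{\oplus n})$, and $+\infty$ otherwise. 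For such $\nu$, writing $g = \sqrt{f}$, one has $\int_{E^n} f \, \mathrm{d}\nu$... more precisely $\int_{E^n} f \, \mathrm{d}\nu = \int_{E^n} f g^2 \, \mathrm{d}\mu^{\otimes n}$ (here abusing notation: the $f$ inside the integrand is the test function, the density is $g^2$), and $\int_{E^n} g^2 \, \mathrm{d}\mu^{\otimes n} = \nu(E^n) = 1$. Thus every $\nu$ with finite Fisher information corresponds to a competitor $g$ in the supremum of Lemma \ref{lem: time invariance of spectral radius}, giving the inequality $(\ge)$ in \eqref{eq: variational1} being bounded above by the left side — i.e. the right-hand side of \eqref{eq: variational1} is at most $\tfrac{1}{t}\log\|\mathrm{P}_{n,t}^f\|$. (Measures not absolutely continuous contribute $-\infty$ on the right of \eqref{eq: variational1}, so they are harmless.)

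For the reverse inequality, I would take an arbitrary competitor $g \in \DD(\mathcal{E}^{\oplus n})$ with $\int_{E^n} g^2 \, \mathrm{d}\mu^{\otimes n} = 1$ and set $\nu := g^2 \mu^{\otimes n}$, which is a probability measure with density $f_\nu := g^2$. The only subtlety is that $\sqrt{f_\nu} = |g|$ rather than $g$ itself, so I must check $|g| \in \DD(\mathcal{E}^{\oplus n})$ and $\mathcal{E}^{\oplus n}(|g|,|g|) \le \mathcal{E}^{\oplus n}(g,g)$; this is the standard Markovian/contraction property of Dirichlet forms (the form is closed and the unit contraction operates, cf.\ \cite[Theorem 3.1.1]{fukushima2010dirichlet}), applied coordinatewise to $\mathcal{E}$ in \eqref{def:sumform}. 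Granting this, $I(\nu\,|\,\mu^{\otimes n}) = \mathcal{E}^{\oplus n}(|g|,|g|) \le \mathcal{E}^{\oplus n}(g,g)$, while $\int_{E^n} f \, \mathrm{d}\nu = \int_{E^n} f g^2 \, \mathrm{d}\mu^{\otimes n}$, so
\[
\int_{E^n} f g^2 \, \mathrm{d}\mu^{\otimes n} - \mathcal{E}^{\oplus n}(g,g) \le \int_{E^n} f \, \mathrm{d}\nu - I(\nu\,|\,\mu^{\otimes n}) \le \sup_{\nu' \in \P(E^n)}\left(\int_{E^n} f \, \mathrm{d}\nu' - I(\nu'\,|\,\mu^{\otimes n})\right).
\]
Taking the supremum over $g$ and invoking Lemma \ref{lem: time invariance of spectral radius} gives $\tfrac{1}{t}\log\|\mathrm{P}_{n,t}^f\| \le$ the right-hand side of \eqref{eq: variational1}, completing the proof.

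The main (and really only) obstacle is the passage from $g$ to $|g|$ — i.e. verifying that replacing a general element of $\DD(\mathcal{E}^{\oplus n})$ by its absolute value does not increase the Dirichlet form and keeps us in the domain. This is precisely the statement that $\mathcal{E}^{\oplus n}$ is a Dirichlet form (unit contraction operates), which holds because $\mathcal{E}$ is one and the sum-form construction \eqref{def:sumform} preserves this property; alternatively it follows from the probabilistic representation since $(\mathrm{P}_{n,t})$ is Markovian. Everything else is bookkeeping with the definitions \eqref{def:In}, \eqref{def:sumform} and the normalization $\int g^2 \, \mathrm{d}\mu^{\otimes n} = 1 \Leftrightarrow \nu \in \P(E^n)$. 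One could even cite this lemma as essentially contained in \cite[Lemma 6.1]{guillin2009transportation} (applied to $\mu^{\otimes n}$), but I would include the short argument for completeness.
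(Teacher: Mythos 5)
Your proposal is correct and follows essentially the same route as the paper: derive the formula from Lemma \ref{lem: time invariance of spectral radius} by identifying normalized nonnegative elements of $\DD(\mathcal{E}^{\oplus n})$ with probability measures via $g = \sqrt{\mathrm{d}\nu/\mathrm{d}\mu^{\otimes n}}$, with the contraction property $\mathcal{E}^{\oplus n}(|g|,|g|) \le \mathcal{E}^{\oplus n}(g,g)$ justifying the restriction to $g \ge 0$. The paper cites \cite[Theorem 1.4.1]{fukushima2010dirichlet} for that contraction property, but this is only a minor difference in referencing, not in substance.
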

\begin{proof}
This is a straightforward consequence of Lemma \ref{lem: time invariance of spectral radius}, similar to the argument in \cite[Proof of Theorem 2.4]{guillin2009transportation}.  The contraction property of the Dirichlet form \cite[Theorem 1.4.1]{fukushima2010dirichlet} ensures that $\mathcal{E}^{\oplus n}(|g|, |g|) \leq \mathcal{E}^{\oplus n}(g, g)$ for any $g \in \DD(\mathcal{E}^{\oplus n})$. This allows us to restrict the supremum in the formula of Lemma \ref{lem: time invariance of spectral radius} to nonnegative functions $g$, which we may then identify with a probability measure via $g=\sqrt{d\nu/d\mu^{\otimes n}}$. That is, for $f \in B(E^n)$,
\begin{align*}
\frac{1}{t}\log\big\| \mathrm{P}_{n,t}^{f}\big\|_{L^2(\mu^{\otimes n})} 
&=\sup\left\{ \int_{ E^n} fg^{2}\mathrm{d}\mu^{\otimes n} - \mathcal{E}^{\oplus n}(g,g) : g\in\DD(\mathcal{E}^{\oplus n}), g \geq 0, \int_{ E^n} g^{2} \mathrm{d} \mu^{\otimes n} = 1  \right\} 
\\
&=\sup_{\nu\in\P(E^n)}\left(\int_{E^n} f\,\mathrm{d}\nu-I(\nu\,|\,\mu^{\otimes n})\right). 
\end{align*}
\vskip-.6cm
\end{proof}

Our third lemma is an important \emph{chain rule} for the Fisher information $I(\cdot\,|\,\mu^{\otimes n})$, borrowed from \cite{guillin2009transportation}. For integers $n \ge k \ge 1$ and $x=(x_1,\ldots,x_n)\in E^n$, we  denote by $x_{-k}\coloneqq(x_{i})_{i\in\{1,\dots,n\}\backslash k}$ the vector consisting of all but the $k^{\text{th}}$ coordinate.
Let $\pi_{-k} :  E^{n}\rightarrow E^{n-1}$ be the natural projection, i.e., $\pi_{-k}\left(x\right)=x_{-k}$.
For $\nu\in\P(E^{n})$, we define the measurable map  $\nu_{-k}:E^{n-1}\rightarrow\P(E)$ via disintegration
\begin{equation}
\nu(\mathrm{d}x_{1},\dots,\mathrm{d}x_{n}) = \nu_{-k}(x_{-k})(\mathrm{d}x_{k}) \, \nu\circ \pi_{-k}^{-1}(\mathrm{d}x_{-k}). \label{def:disintegration1}
\end{equation}
In probabilistic terms, if $X=(X_1,\dots,X_n)$ has joint law $\nu$, then $\nu_{-k}(X_{-k})$ is a version of the conditional law of $X_k$ given $X_{-k}$. Note that $\nu_{-k}$ is uniquely determined up to $\nu$-a.s.\ equality.

\begin{lemma} \label{le:In-tensorized} \cite[Lemma 2.12]{guillin2009transportation}
For each $n \in \N$ and $\nu \in \P(E^n)$, it holds that
\begin{align*}
I(\nu\,|\,\mu^{\otimes n}) &= \int_{E^n} \sum_{k=1}^n I(\nu_{-k}(x_{-k})\,|\,\mu)\,\nu(\mathrm{d}x).
\end{align*}
\end{lemma}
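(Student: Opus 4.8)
The plan is to unwind both sides directly from the sum-form definition \eqref{def:sumform} of $\mathcal{E}^{\oplus n}$, so that the chain rule reduces to the $2$-homogeneity of the quadratic form $g \mapsto \mathcal{E}(g,g)$ together with Fubini's theorem. First I would dispose of the case $\nu \not\ll \mu^{\otimes n}$, in which both sides should be $+\infty$: here one uses that $\nu \ll \mu^{\otimes n}$ if and only if $\nu_{-k}(x_{-k}) \ll \mu$ for $\nu$-a.e.\ $x$ and every $k \in \{1,\dots,n\}$, the nontrivial direction being an induction on $n$ (pass to the marginal $\nu\circ\pi_{-n}^{-1}$ on $E^{n-1}$ and use that a mixture of measures absolutely continuous with respect to $\mu$ is absolutely continuous with respect to $\mu$). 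If $\nu \not\ll \mu^{\otimes n}$, then for some $k$ one has $\nu_{-k}(x_{-k}) \not\ll \mu$ — hence $I(\nu_{-k}(x_{-k})\,|\,\mu) = +\infty$ — on a set of positive $\nu$-measure, so the sum on the right is $+\infty$.

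So assume $\nu \ll \mu^{\otimes n}$, and set $f \coloneqq \mathrm{d}\nu/\mathrm{d}\mu^{\otimes n}$ and $g \coloneqq \sqrt{f}$. The core is a one-coordinate computation. Fix $k$ and write $\mu^{\otimes n} = \mu(\mathrm{d}x_k)\otimes\mu^{\otimes(n-1)}(\mathrm{d}x_{-k})$. Then $\nu\circ\pi_{-k}^{-1}$ has $\mu^{\otimes(n-1)}$-density $\bar{f}_{-k}(x_{-k}) \coloneqq \int_E f(\cdot,x_{-k})\,\mathrm{d}\mu$, and on $\{\bar{f}_{-k} > 0\}$ the conditional $\nu_{-k}(x_{-k})$ has $\mu$-density $f(\cdot,x_{-k})/\bar{f}_{-k}(x_{-k})$. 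Applying $\mathcal{E}(cg,cg)=c^2\mathcal{E}(g,g)$ with $c=\bar{f}_{-k}(x_{-k})^{-1/2}$,
\begin{align*}
I(\nu_{-k}(x_{-k})\,|\,\mu) = \frac{1}{\bar{f}_{-k}(x_{-k})}\,\mathcal{E}\big(g(\cdot,x_{-k}),\,g(\cdot,x_{-k})\big)
\end{align*}
wherever $\bar{f}_{-k}(x_{-k})>0$ (both sides being $+\infty$ if $g(\cdot,x_{-k})\notin\DD(\mathcal{E})$, and both vanishing where $\bar{f}_{-k}(x_{-k})=0$, since there $f(\cdot,x_{-k})=0$ $\mu$-a.e.). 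The integrand depends only on $x_{-k}$, so integrating against $\nu$ is the same as integrating against $\nu\circ\pi_{-k}^{-1}=\bar{f}_{-k}\,\mu^{\otimes(n-1)}$; the factor $\bar{f}_{-k}$ cancels, and since the resulting integrand does not involve $x_k$ while $\mu$ is a probability measure,
\begin{align*}
\int_{E^n} I(\nu_{-k}(x_{-k})\,|\,\mu)\,\nu(\mathrm{d}x)
&= \int_{E^{n-1}} \mathcal{E}\big(g(\cdot,x_{-k}),\,g(\cdot,x_{-k})\big)\,\mu^{\otimes(n-1)}(\mathrm{d}x_{-k}) \\
&= \int_{E^n} \mathcal{E}\big(g(\cdot,x_{-k}),\,g(\cdot,x_{-k})\big)\,\mu^{\otimes n}(\mathrm{d}x),
\end{align*}
as an identity in $[0,\infty]$.

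Summing this over $k=1,\dots,n$ and comparing with \eqref{def:sumform}, the right-hand side of the lemma equals $\mathcal{E}^{\oplus n}(g,g)$; by \eqref{def:In} this is $I(\nu\,|\,\mu^{\otimes n})$ once one checks that $g=\sqrt{f}\in\DD(\mathcal{E}^{\oplus n})$ exactly when this common value is finite (if not, then either $g(\cdot,x_{-i})\notin\DD(\mathcal{E})$ on a positive-measure set for some $i$, or the sum-form integral diverges, and either way the right-hand side is $+\infty$ too). I expect the only genuine obstacle to be the degenerate case $\nu\not\ll\mu^{\otimes n}$ — specifically the induction establishing that $\nu$ is absolutely continuous precisely when all the conditionals $\nu_{-k}$ are; the remaining points, namely measurability of $x_{-k}\mapsto I(\nu_{-k}(x_{-k})\,|\,\mu)$ (from lower semicontinuity of $I(\cdot\,|\,\mu)$ and measurability of disintegrations) and the matching of the domains $\DD(\mathcal{E}^{\oplus n})$ and $\DD(\mathcal{E})$, are routine.
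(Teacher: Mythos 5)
Your argument is correct. The paper gives no proof of this lemma at all --- it is quoted verbatim from \cite[Lemma 2.12]{guillin2009transportation} --- so there is nothing internal to compare against; your direct computation (disintegrate $\nu$, use the $2$-homogeneity $\mathcal{E}(cg,cg)=c^2\mathcal{E}(g,g)$ to cancel the marginal density $\bar f_{-k}$, then Fubini against $\mu^{\otimes n}$ and sum over $k$ to recover the sum-form \eqref{def:sumform}) is essentially the standard proof found in that reference. You also correctly identified and handled the only genuinely delicate point, namely that $\nu\not\ll\mu^{\otimes n}$ forces some conditional $\nu_{-k}(x_{-k})\not\ll\mu$ on a set of positive $\nu$-measure, so that both sides are $+\infty$ in the degenerate case; the inductive argument via mixtures of absolutely continuous measures does work. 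The remaining caveats you flag (measurability of $x_{-k}\mapsto I(\nu_{-k}(x_{-k})\,|\,\mu)$, and the identification of $\DD(\mathcal{E}^{\oplus n})$ with the set where the common value is finite) are indeed routine, the latter being built into the paper's definition of $\DD(\mathcal{E}^{\oplus n})$.
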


We are now ready to give the proof of the lower bound \eqref{eq:OpNorm-lower} of Theorem \ref{th:OpNorm-Sanov}. Again, only the lower bound is needed for the proof of Theorem \ref{th:W2I-main}, given just below.
The upper bound \eqref{eq:OpNorm-upper} of Theorem \ref{th:OpNorm-Sanov} requires some additional assumptions and machinery and is less self-contained, so we defer its proof to the very end of Section \ref{se:Sanov-abstract}. 

{\ }

\noindent\textbf{Proof of the lower bound \eqref{eq:OpNorm-lower} of Theorem \ref{th:OpNorm-Sanov}.}
According to Lemma \ref{lemma: duality}, we have
\begin{align*}
\frac{1}{nt}\log\big\| \mathrm{P}_{n,t}^{nF\circ L_{n}}\big\|_{L^2(\mu^{\otimes n})} &= \sup_{\nu \in \P(E^n)}\left( \int_{E^n} F \circ L_n \, \mathrm{d}\nu - \frac{1}{n} I(\nu\,|\,\mu^{\otimes n})\right).
\end{align*}
Choose $\nu \in \P(E)$ arbitrarily.
From the formula of Lemma \ref{le:In-tensorized} it follows immediately that $I(\cdot\,|\,\mu^{\otimes n})$ simplifies for product measures, in the sense that $I(\nu^{\otimes n}\,|\,\mu^{\otimes n}) = n I(\nu\,|\,\mu)$. Hence,
\begin{align*}
\frac{1}{nt}\log\big\| \mathrm{P}_{n,t}^{nF\circ L_{n}}\big\|_{L^2(\mu^{\otimes n})} &\ge \int_{E^n} F \circ L_n \, \mathrm{d}\nu^{\otimes n} - I(\nu\,|\,\mu).
\end{align*}
By the law of large numbers for empirical measures, we have $\nu^{\otimes n}\circ L_n^{-1} \to \delta_{\nu}$ in $\P(\P(E))$.
Together with the lower semicontinuity and boundedness of $F$ and a version of the Portmanteau theorem  \cite[Theorem A.3.12]{dupuis2011weak}, this yields
\begin{align*}
\liminf_{n\to\infty} \frac{1}{nt}\log\big\| \mathrm{P}_{n,t}^{nF\circ L_{n}}\big\|_{L^2(\mu^{\otimes n})} &\ge F(\nu) - I(\nu\,|\,\mu).
\end{align*}
The lower bound \eqref{eq:OpNorm-lower} follows by taking the supremum over $\nu \in \P(E)$. \hfill \qed

{\ }

\noindent\textbf{Proof of  Theorem \ref{th:W2I-main}.} 
\begin{itemize}
\item (\ref{enu: equivalence1}) $\Rightarrow$ (\ref{enu: equivalence2}): 
Let $n \in \N$. Since $\mu$ satisfies $W_{2}I(\const )$, the tensorization property of transportation-information inequalities \cite[Corollary 2.13]{guillin2009transportation} ensures that the product measure 
$\mu^{\otimes n}$ also satisfies $W_{2}I(\const )$.  Hence, by Jensen's inequality, $\mu^{\otimes n}$ satisfies $W_{1}I(\const )$. 

\item \eqref{enu: equivalence2} $\Leftrightarrow$ \eqref{enu: equivalence3} $\Leftrightarrow$ \eqref{enu: equivalence4}:
This follows by applying Theorem \ref{th:W1I} to $(E^n,\mu^{\otimes n})$ for each $n$.

\item \eqref{enu: equivalence3} $\Rightarrow$ \eqref{enu: equivalence1}:
Let $M>0$. Define $F: \P(E) \to \R$ by
\[
F(\nu) \coloneqq W_2(\mu, \nu) \wedge M .
\]
A standard argument shows that, for each $n \in \N$ and $x,y \in E^n$,
\[
W_2^2(L_n(x),L_n(y)) \le \frac{1}{n}\sum_{i=1}^nd^2(x_i,y_i).
\]
Recalling that $E^n$ is equipped with the $\ell^2$-metric, this implies by the triangle inequality that $\sqrt{n}F \circ L_n$ is 1-Lipschitz on $E^n$. 
Therefore, by \eqref{eq: dimension-free concentration}, for
all $\lambda\geq 0$, 
\begin{equation}
\frac{1}{nt}\log\left\Vert \mathrm{P}_{n,t}^{\lambda n F \circ L_n}\right\Vert _{L^{2}\left(\mu^{\otimes n}\right)}\leq \lambda\int_{E^{n}}F \circ L_n \, \mathrm{d}\mu^{\otimes n}+\frac{\const \lambda^{2}}{4}. \label{eq: pre-limit}
\end{equation}
Since $\mu$ has finite second moment by assumption, the law of large numbers in  Wasserstein distance implies
\begin{equation}
\lim_{n\rightarrow\infty}\int_{E^{n}}F \circ L_n\,\mathrm{d}\mu^{\otimes n} = 0.\label{eq: WLLN}
\end{equation}
Note also that  $W_{2}(\mu,\cdot)$ is lower semicontinuous (which follows from Kantorovich duality, for instance). Since $F$ is thus lower semicontinuous and also bounded, we may apply the lower bound of Theorem \ref{th:OpNorm-Sanov}, followed by \eqref{eq: pre-limit} and \eqref{eq: WLLN}, to get
\begin{align*}
\sup_{\nu\in\P (E)} \Bigl (\lambda W_{2}(\mu,\nu) \wedge M - I(\nu\,|\,\mu)\Bigr) &\le \liminf_{n\rightarrow\infty}\frac{1}{nt}\log\left\Vert \mathrm{P}_{n,t}^{\lambda n F \circ L_n}\right\Vert _{L^{2} (\mu^{\otimes n})} \le \frac{\const \lambda^{2}}{4},
\end{align*}
for $\lambda \ge 0$.
Consequently, for all $\nu\in\P(E)$ and $\lambda\geq 0$,
\begin{equation*}
\lambda W_{2}(\mu,\nu) \wedge M-\frac{\const\lambda^{2}}{4} \le I(\nu \, |\, \mu).
\end{equation*}
Since $M>0$ was arbitrary, letting $M \rightarrow \infty$ gives 
\begin{equation*}
\lambda W_{2}(\mu,\nu) -\frac{\const \lambda^{2}}{4} \le I(\nu\,|\,\mu).
\end{equation*}
for all $\nu \in \P(E)$ and $\lambda \ge 0$. 
Optimize over $\lambda\geq 0$ to get $\const^{-1}W_{2}^{2}(\mu,\nu) \le I(\nu\,|\,\mu)$ for all $\nu \in \P(E)$, so that $\mu$ satisfies $W_{2}I(\const )$. \hfill \qed
\end{itemize}

\section{A limit theorem of Sanov type} \label{se:Sanov-abstract}

In this section, we prove an abstract version of Theorem \ref{th:OpNorm-Sanov}, inspired by recent work of the first author \cite{lacker_2020}.
Fix throughout this section a measurable functional $\alpha:\P(E)\rightarrow\left(-\infty,\infty\right]$ which is bounded from below and not identically $+\infty$. At the end of this section, we will specialize to $\alpha=I(\cdot\,|\,\mu)$ in order to prove the upper bound of Theorem \ref{th:OpNorm-Sanov}.

To define a tensorized functional $\alpha_n : \P(E^n) \to (-\infty,\infty]$ for each $n \in \N$, recall the notation for the conditional measures $\nu_{-k}$ for $\nu \in \P(E^n)$, defined in \eqref{def:disintegration1}. 
Define 
\begin{align}
\alpha_{n}(\nu)&\coloneqq\int_{E^{n}}\sum_{k=1}^{n}\alpha\left(\nu_{-k}\left(x_{-k}\right)\right)\,\nu\left(\mathrm{d}x_{1},\dots,\mathrm{d}x_{n}\right), \qquad \nu \in \P(E^n). \label{eq: alpha_n}
\end{align}
Note that $\alpha_n$ is well defined and bounded from below because $\alpha$ was assumed to be measurable and bounded from below.
We define the convex conjugate $\rho_{n} : B(E^n) \to \R$ by
\begin{equation}
\rho_{n}(f)\coloneqq\sup_{\nu\in\P\left(E^{n}\right)}\left(\int_{E^{n}}f\,\mathrm{d}\nu-\alpha_{n}\left(\nu\right)\right), \qquad f \in B(E^n). \label{eq: rho_n}
\end{equation}
Note that $\rho_n$ is indeed real-valued because $\alpha_n$ is bounded from below and not identically $+\infty$.

In the case $\alpha=I(\cdot\,|\,\mu)$, it holds that $\alpha_n = I(\cdot\,|\,\mu^{\otimes n})$ by Lemma \ref{le:In-tensorized}, where the tensorized Fisher information was defined in Section \ref{se:introW2I}. Moreover, in this case, $\rho_n(f) = \tfrac{1}{t}\log \|\mathrm{P}_{n,t}^f\|_{L^2(\mu^{\otimes n})}$ for any $t > 0$, by Lemma \ref{lemma: duality}.

Recall in the following the definition of the empirical measure map from \eqref{def:empmeas}.

\begin{theorem} \label{th:Sanov}
For any bounded lower semicontinuous function $F:\P(E)\rightarrow\R$,
\begin{equation}
\liminf_{n\rightarrow\infty}\frac{1}{n}\rho_{n}\left(nF\circ L_{n}\right)\geq \sup_{\nu\in\P\left(E\right)}\left(F(\nu)-\alpha(\nu)\right). \label{eq: main lower bound}
\end{equation}
Suppose in addition that $\alpha$ is convex and has compact sub-level sets.  Then, for any bounded upper semicontinuous function $F:\P(E)\rightarrow\R$,
\begin{equation}
\limsup_{n\rightarrow\infty}\frac{1}{n}\rho_{n}\left(nF\circ L_{n}\right)\leq\sup_{\nu\in\P\left(E\right)}\left(F(\nu)-\alpha(\nu)\right). \label{eq: main upper bound}
\end{equation}
\end{theorem}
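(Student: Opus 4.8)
The statement has two halves, which I would handle separately. The \emph{lower bound} \eqref{eq: main lower bound} follows by repeating verbatim the argument already given for \eqref{eq:OpNorm-lower}, with $\rho_n$ in place of $\tfrac1t\log\|\mathrm{P}^f_{n,t}\|_{L^2(\mu^{\otimes n})}$. From \eqref{eq: alpha_n} one reads off that $\alpha_n(\nu^{\otimes n})=n\alpha(\nu)$ for every $\nu\in\P(E)$, so by the definition of $\rho_n$, for each fixed $\nu\in\P(E)$,
\[
\tfrac1n\rho_n(nF\circ L_n)\;\ge\;\int_{E^n}F\circ L_n\,\mathrm{d}\nu^{\otimes n}-\alpha(\nu).
\]
The law of large numbers for empirical measures gives $\nu^{\otimes n}\circ L_n^{-1}\to\delta_\nu$ in $\P(\P(E))$; since $F$ is bounded and lower semicontinuous, the Portmanteau theorem (as in \cite[Theorem A.3.12]{dupuis2011weak}) yields $\liminf_n\int F\circ L_n\,\mathrm{d}\nu^{\otimes n}\ge F(\nu)$. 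Taking $\liminf$ and then the supremum over $\nu\in\P(E)$ proves \eqref{eq: main lower bound}.

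The \emph{upper bound} \eqref{eq: main upper bound} is the substantive part, and is where the convexity and compact sublevel sets of $\alpha$ enter; I would organize it in three steps. \emph{Step 1 (reduction to exchangeable $\nu$).} First check that $\alpha_n$ is convex on $\P(E^n)$: if $\nu=\theta\nu^{(1)}+(1-\theta)\nu^{(2)}$, then for $\nu\circ\pi_{-k}^{-1}$-a.e.\ $x_{-k}$ the kernel $\nu_{-k}(x_{-k})$ is the convex combination of $\nu^{(1)}_{-k}(x_{-k})$ and $\nu^{(2)}_{-k}(x_{-k})$ with the Radon--Nikodym weights of $\nu^{(i)}\circ\pi_{-k}^{-1}$ against $\nu\circ\pi_{-k}^{-1}$; applying convexity of $\alpha$ pointwise and integrating these weights back out gives $\alpha_n(\nu)\le\theta\alpha_n(\nu^{(1)})+(1-\theta)\alpha_n(\nu^{(2)})$. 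Since $\alpha_n$ is moreover symmetric under coordinate permutations and $F\circ L_n$ is permutation-invariant, replacing $\nu$ by its symmetrization $\bar\nu\coloneqq\tfrac1{n!}\sum_\sigma\nu\circ\sigma^{-1}$ leaves $\int F\circ L_n\,\mathrm{d}\nu$ unchanged and does not increase $\alpha_n$; hence it suffices to bound $\int_{E^n}F\circ L_n\,\mathrm{d}\nu-\tfrac1n\alpha_n(\nu)$ uniformly over exchangeable $\nu\in\P(E^n)$. \emph{Step 2 (reduction to continuous $F$).} A bounded upper semicontinuous $F$ on the metrizable space $\P(E)$ is a pointwise decreasing limit of bounded Lipschitz functions $F_m\downarrow F$, and $\rho_n(nF\circ L_n)\le\rho_n(nF_m\circ L_n)$. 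Using boundedness of $F_1$ and compactness of $\{\alpha\le c\}$, one sees $\sup_\nu(F_m(\nu)-\alpha(\nu))$ is attained on a fixed compact sublevel set of $\alpha$, on which the usc functions $F_m-\alpha$ decrease to $F-\alpha$; a Dini-type argument then gives $\sup_\nu(F_m(\nu)-\alpha(\nu))\downarrow\sup_\nu(F(\nu)-\alpha(\nu))$, so it is enough to treat bounded Lipschitz $F$. \emph{Step 3 (the core estimate).} For exchangeable $\nu$ one has $\tfrac1n\alpha_n(\nu)=\E_\nu[\alpha(\nu_{-1}(X_{-1}))]$, where $\nu_{-1}(X_{-1})$ is the conditional law of $X_1$ given $X_2,\dots,X_n$. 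The key structural fact is that, for an exchangeable vector, conditioning $\delta_{X_1}$ on the $\sigma$-field generated by the multiset $\{X_1,\dots,X_m\}$ together with the values $X_{m+1},\dots,X_n$ returns $\tfrac1m\sum_{j=1}^m\delta_{X_j}$; consequently $\big(\tfrac1m\sum_{j=1}^m\delta_{X_j}\big)_{m=1}^n$ is a $\P(E)$-valued reverse martingale, so $\E_\nu[\alpha(L_n(X))]\le\E_\nu\big[\alpha\big(\tfrac1m\sum_{j=1}^m\delta_{X_j}\big)\big]$ for all $m\le n$ by conditional Jensen, and in particular $L_n(X)=\E_\nu[\delta_{X_1}\mid\sigma(L_n(X))]$. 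Combining this with Jensen's inequality, the tower property, and compactness of the sublevel sets of $\alpha$ to absorb the error from the mismatch between $\sigma(\tfrac1{n-1}\sum_{j=2}^n\delta_{X_j})$ and $\sigma(X_2,\dots,X_n)$ (equivalently, from the lower semicontinuity of $\alpha$ and the $O(1/n)$ fluctuations of leave-one-out empirical measures), I would establish the uniform bound $\E_\nu[\alpha(L_n(X))]\le\tfrac1n\alpha_n(\nu)+o(1)$ as $n\to\infty$. Given this, since $F(m)\le V+\alpha(m)$ with $V\coloneqq\sup_{m'}(F(m')-\alpha(m'))$, we obtain $\int F\circ L_n\,\mathrm{d}\nu-\tfrac1n\alpha_n(\nu)\le V+\E_\nu[\alpha(L_n(X))]-\tfrac1n\alpha_n(\nu)\le V+o(1)$, uniformly in exchangeable $\nu$, which together with Steps 1--2 gives \eqref{eq: main upper bound}.

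The main obstacle is the core estimate of Step 3. The naive hope that $\tfrac1n\sum_k\nu_{-k}(X_{-k})$, or $\nu_{-1}(X_{-1})$, is close to $L_n(X)$ is false — e.g.\ when $\nu$ is supported on a ``fixed-sum'' slice the conditional laws are point masses while $L_n(X)$ is spread out — so one must genuinely exploit convexity of $\alpha$, via the reverse-martingale/conditioning structure, to make the comparison run in the right direction, and compactness of the sublevel sets is exactly what controls the residual lower-semicontinuity errors. Finally, the upper bound \eqref{eq:OpNorm-upper} of Theorem \ref{th:OpNorm-Sanov} follows by specializing Theorem \ref{th:Sanov} to $\alpha=I(\cdot\,|\,\mu)$, using that $\alpha_n=I(\cdot\,|\,\mu^{\otimes n})$ by Lemma \ref{le:In-tensorized}, that $\rho_n(f)=\tfrac1t\log\|\mathrm{P}^f_{n,t}\|_{L^2(\mu^{\otimes n})}$ by Lemma \ref{lemma: duality}, and the (standard) convexity of the Fisher information $I(\cdot\,|\,\mu)$.
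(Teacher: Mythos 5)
Your proof of the lower bound \eqref{eq: main lower bound} is correct and coincides with the paper's (product measures, $\alpha_n(\nu^{\otimes n})=n\alpha(\nu)$, law of large numbers, Portmanteau). Steps 1 and 2 of your upper-bound argument (symmetrization via convexity of $\alpha_n$, and reduction of usc $F$ to Lipschitz $F$ by a Dini argument on a compact sub-level set) are also sound. The problem is Step 3: the ``core estimate'' $\E_\nu[\alpha(L_n(X))]\le\tfrac1n\alpha_n(\nu)+o(1)$ uniformly over exchangeable $\nu$ is the entire content of the upper bound, and you do not prove it; moreover the mechanisms you list do not assemble into a proof. The reverse-martingale inequality you state, $\E_\nu[\alpha(L_n(X))]\le\E_\nu[\alpha(\tfrac1m\sum_{j\le m}\delta_{X_j})]$, bounds $\alpha(L_n)$ by $\alpha$ of \emph{coarser} empirical measures (at $m=1$ by $\alpha(\delta_{X_1})$, typically $+\infty$), which is the wrong direction for comparing with $\tfrac1n\alpha_n(\nu)=\E_\nu[\alpha(\nu_{-1}(X_{-1}))]$. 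The tower-property route requires a $\sigma$-field $\mathcal G\subseteq\sigma(X_{-1})$ with $\E[\delta_{X_1}\mid\mathcal G]\approx L_n(X)$; the natural candidate $\mathcal G=\sigma(L_{n-1}(X_2,\dots,X_n))$ satisfies the containment, but $\E[\delta_{X_1}\mid\mathcal G]$ can be at distance of order one from $L_n(X)$ --- in your own ``fixed-sum slice'' example it is the point mass $\delta_{X_1}$ while $L_n(X)=\mu$ --- so the discrepancy is \emph{not} an ``$O(1/n)$ fluctuation of leave-one-out empirical measures'' that compactness can absorb. (A further symptom that the pieces don't fit: your final display uses only $F\le V+\alpha$, never the Lipschitz regularity secured in Step 2, so if Step 3 held as stated the upper bound would follow for all bounded \emph{measurable} $F$.)

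For comparison, the paper avoids attacking the unordered tensorization $\alpha_n$ head-on. It introduces the ordered tensorization $\widehat\alpha_n$ built from the conditional laws $\nu_{k-1,k}(X_1,\dots,X_{k-1})$, proves $\widehat\alpha_n\le\alpha_n$ (hence $\rho_n\le\widehat\rho_n$) by conditional Jensen applied to the identity $\nu_{k-1,k}(X_1,\dots,X_{k-1})=\E[\nu_{-k}(X_{-k})\mid X_1,\dots,X_{k-1}]$ --- note that here the conditioning $\sigma$-field \emph{is} contained in $\sigma(X_{-k})$, which is exactly the containment your approach lacks --- and then invokes the known upper bound of Theorem \ref{th:Sanov-Lacker} from \cite{lacker_2020}. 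That result is itself nontrivial: its proof exploits that $\delta_{X_k}-\nu_{k-1,k}(X_1,\dots,X_{k-1})$ are martingale differences, so the averaged conditional measure is provably close to $L_n(X)$, a structure with no analogue for the leave-one-out kernels $\nu_{-k}(X_{-k})$. To repair your argument you would either need to supply a genuine proof of your core estimate (which I do not see how to extract from your ingredients), or pass to the ordered tensorization as the paper does. Your closing paragraph deducing \eqref{eq:OpNorm-upper} from Theorem \ref{th:Sanov} matches the paper.
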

\noindent\textbf{Proof of the lower bound \eqref{eq: main lower bound}.} 
This is essentially identical to the proof of the lower bound in Theorem \ref{th:OpNorm-Sanov}.
Let $\nu \in \P(E)$, and note for product measures that we have the simplification $\alpha_n(\nu^{\otimes n})=n\alpha\left(\nu\right)$.
Bound the supremum in the definition of $\rho_n$ from below using the measure $\nu^{\otimes n}$ to get
\begin{align*}
\frac{1}{n}\rho_{n}\left(nF\circ L_{n}\right) &\ge\int_{E^{n}}F\circ L_{n}\,\mathrm{d}\nu^{\otimes n}-\frac{1}{n}\alpha_{n}\left(\nu^{\otimes n}\right)  = \int_{E^{n}}F\circ L_{n}\,\mathrm{d}\nu^{\otimes n}-\alpha(\nu).
\end{align*}
Use the law of large numbers along with lower semicontinuity and boundedness of $F$ to get
\begin{align*}
\liminf_{n\rightarrow\infty}\frac{1}{n}\rho_{n}\left(nF\circ L_{n}\right) &\ge F\left(\nu\right)-\alpha\left(\nu\right).
\end{align*}
The lower bound \eqref{eq: main lower bound} now follows by taking the supremum over $\nu\in\P(E)$. \hfill \qed

{\ }

To prove the upper bound, we next develop an alternative tensorization $\widehat{\alpha}_n$ which, unlike $\alpha_n$, takes into account an order of the coordinates.
For $n \in \N$ and $\nu\in\P(E^n)$, we define $\nu_{0,1}\in\P(E)$ and the measurable maps $\nu_{k-1,k}:E^{k-1}\rightarrow\P(E)$ for $k=2,\dots,n$ via the disintegration
\begin{equation*}
\nu(\mathrm{d}x_1,\dots,\mathrm{d}x_n)=\nu_{0,1}(\mathrm{d}x_1)\prod_{k=2}^n \nu_{k-1,k}(x_1,\dots,x_{k-1})(\mathrm{d}x_k).
\end{equation*}
In other words, if $X=(X_1,\dots,X_n)$ has joint law $\nu$,
then $\nu_{0,1}$ is the marginal law of $X_1$, and $\nu_{k-1,k}(X_1,\dots,X_{k-1})$ is a version of the conditional law of $X_k$ given $(X_1,\dots,X_{k-1})$.
Next, define $\widehat{\alpha}_n : \P(E^n) \to (-\infty,\infty]$ and its conjugate $\widehat{\rho}_n : B(E^n) \to \R$ by
\begin{align}
\widehat{\alpha}_n(\nu) &\coloneqq \int_{E^{n}}\sum_{k=1}^{n}\alpha\left(\nu_{k-1,k}\left(x_1,\dots,x_{k-1}\right)\right)\,\nu\left(\mathrm{d}x_{1},\dots,\mathrm{d}x_{n}\right). \label{eq: alpha_hat_n} \\
\widehat{\rho}_{n}(f) &\coloneqq \sup_{\nu\in\P\left(E^{n}\right)}\left(\int_{E^{n}}f\,\mathrm{d}\nu-\widehat{\alpha}_{n}\left(\nu\right)\right). \label{eq: rho_hat_n}
\end{align}
The analogue of Theorem \ref{th:Sanov} for this form of tensorization is known:\footnote{Strictly speaking, \cite[Theorem 1.1]{lacker_2020} assumes convexity of $\alpha$ and compactness of its sub-level sets, but these assumptions are not needed for the easy proof of the lower bound, which is identical to that of Theorem \ref{th:Sanov}.}

\begin{theorem} \cite[Theorem 1.1]{lacker_2020} \label{th:Sanov-Lacker}
For any bounded lower semicontinuous function $F:\P(E)\rightarrow\R$,
\begin{equation*}
\liminf_{n\rightarrow\infty}\frac{1}{n}\widehat{\rho}_{n}\left(nF\circ L_{n}\right)\geq \sup_{\nu\in\P\left(E\right)}\left(F(\nu)-\alpha(\nu)\right). 
\end{equation*}
Suppose in addition that $\alpha$ is convex and has compact sub-level sets.  Then, for any bounded upper semicontinuous function $F:\P(E)\rightarrow\R$,
\begin{equation*}
\limsup_{n\rightarrow\infty}\frac{1}{n}\widehat{\rho}_{n}\left(nF\circ L_{n}\right)\leq\sup_{\nu\in\P\left(E\right)}\left(F(\nu)-\alpha(\nu)\right).
\end{equation*}
\end{theorem}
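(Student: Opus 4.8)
The plan is to treat the two inequalities separately; the lower bound is immediate from the law of large numbers (exactly as for the lower bound of Theorem~\ref{th:Sanov}), while the upper bound is the substantive part and is where I would concentrate the effort. For the lower bound, fix $\nu\in\P(E)$ and bound the supremum defining $\widehat\rho_n(nF\circ L_n)$ from below using the product measure $\nu^{\otimes n}$. All successive conditional laws of a product measure equal $\nu$, so $\widehat\alpha_n(\nu^{\otimes n})=n\alpha(\nu)$, and hence $\tfrac1n\widehat\rho_n(nF\circ L_n)\ge\int_{E^n}F\circ L_n\,\mathrm{d}\nu^{\otimes n}-\alpha(\nu)$. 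Since $\nu^{\otimes n}\circ L_n^{-1}\to\delta_\nu$ weakly in $\P(\P(E))$ by the law of large numbers for empirical measures and $F$ is bounded and lower semicontinuous, the Portmanteau theorem gives $\liminf_n\int F\circ L_n\,\mathrm{d}\nu^{\otimes n}\ge F(\nu)$; taking the supremum over $\nu$ yields the lower bound.

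For the upper bound, assume $\alpha$ is convex with compact sub-level sets $K_L:=\{\alpha\le L\}$ (so $\alpha$ is lower semicontinuous). For each $n$ choose $\nu^{(n)}\in\P(E^n)$ achieving the supremum defining $\widehat\rho_n(nF\circ L_n)$ up to an additive error $1/n$; let $(X_1,\dots,X_n)$ have law $\nu^{(n)}$ with filtration $\F_k=\sigma(X_1,\dots,X_k)$, write $M_k:=\mathcal L(X_k\,|\,\F_{k-1})$ for the successive conditional laws (so $\widehat\alpha_n(\nu^{(n)})=\E[\sum_{k=1}^n\alpha(M_k)]$, expectation under $\nu^{(n)}$), and put $\bar M_n:=\tfrac1n\sum_{k=1}^nM_k$, a random element of $\P(E)$. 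The argument then rests on three ingredients. First, by Jensen's inequality and convexity, $\tfrac1n\sum_k\alpha(M_k)\ge\alpha(\bar M_n)$, hence $\tfrac1n\widehat\alpha_n(\nu^{(n)})\ge\E[\alpha(\bar M_n)]$. Second, a \emph{predictable-projection} estimate: for measurable $\phi$ with $\|\phi\|_\infty\le1$ the increments $\phi(X_k)-\langle M_k,\phi\rangle$ form a bounded martingale-difference sequence for $(\F_k)$, so by orthogonality $\E[\langle L_n(X)-\bar M_n,\phi\rangle^2]\le4/n$; summing over a countable convergence-determining family $(\phi_j)$ with $\|\phi_j\|_\infty\le1$ and using $d(m,m'):=\sum_j2^{-j}(|\langle m-m',\phi_j\rangle|\wedge1)$ as a metric for weak convergence gives $\E[d(L_n(X),\bar M_n)]\le2/\sqrt n$, crucially uniformly over the choice of $\nu^{(n)}$. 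Third, a \emph{tightness} estimate: from the first ingredient, near-optimality, and $\alpha$ bounded below, $\sup_n\E[\alpha(\bar M_n)]<\infty$, so Markov's inequality yields $\sup_n\nu^{(n)}(\bar M_n\notin K_L)\to0$ as $L\to\infty$.

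To combine these, write $F=\inf_j\phi_j'$ as a decreasing limit of bounded continuous $\phi_j':\P(E)\to\R$ (e.g.\ sup-convolutions), so $\int F\circ L_n\,\mathrm{d}\nu^{(n)}\le\E[\phi_j'(L_n(X))]$, and therefore
\[
\tfrac1n\widehat\rho_n(nF\circ L_n)-\tfrac1n\le\E\big[\phi_j'(\bar M_n)-\alpha(\bar M_n)\big]+\E\big[\phi_j'(L_n(X))-\phi_j'(\bar M_n)\big]\le\sup_{\theta\in\P(E)}\big(\phi_j'(\theta)-\alpha(\theta)\big)+\E\big[\phi_j'(L_n(X))-\phi_j'(\bar M_n)\big].
\]
The last expectation has nonpositive $\limsup$: $\phi_j'$ is uniformly continuous on a closed neighborhood of the compact set $K_L$, on which $L_n(X)$ and $\bar M_n$ both concentrate and are close (uniformly in $n$) by the tightness and predictable-projection estimates, with $\|\phi_j'\|_\infty<\infty$ controlling the remainder; letting $n\to\infty$ and then $L\to\infty$ makes it vanish. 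Hence $\limsup_n\tfrac1n\widehat\rho_n(nF\circ L_n)\le\sup_\theta(\phi_j'(\theta)-\alpha(\theta))$ for every $j$, and a final compactness step — the near-maximizers of $\phi_j'-\alpha$ stay in a fixed sub-level set of $\alpha$, where $\inf_j\phi_j'=F$ and $\alpha$ is lower semicontinuous — gives $\sup_\theta(\phi_j'-\alpha)\downarrow\sup_\theta(F-\alpha)$, which is the upper bound.

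The main obstacle is precisely that the near-optimizers $\nu^{(n)}$ change with $n$: the comparison between $L_n(X)$ and its predictable projection $\bar M_n$ must therefore be quantitative and uniform in $\nu^{(n)}$, which forces the $L^2$ martingale estimate rather than a mere in-probability statement, and the passage from continuous test functions back to the genuinely upper semicontinuous $F$ must be localized to a fixed compact subset of $\P(E)$ using the compact-sub-level-set hypothesis. Convexity of $\alpha$ enters only in the elementary Jensen step, and is exactly what obliges us to work with the successive conditional laws $M_k$ — equivalently, with the ordered tensorization $\widehat\alpha_n$ — rather than an unordered variant.
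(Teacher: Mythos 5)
Your proof is correct, but note that the paper does not actually prove this theorem: it is quoted from \cite[Theorem 1.1]{lacker_2020}, with only a footnote observing that the lower bound needs neither convexity nor compact sub-level sets and is proved exactly as the lower bound of Theorem \ref{th:Sanov}. Your lower-bound argument coincides with that one (evaluate $\widehat{\rho}_n$ at $\nu^{\otimes n}$, use $\widehat{\alpha}_n(\nu^{\otimes n})=n\alpha(\nu)$, then the law of large numbers and the Portmanteau theorem). Your upper bound is therefore a genuine reconstruction of the cited result rather than a variant of anything in this paper, and its architecture --- near-optimizers $\nu^{(n)}$, the predictable projection $\bar M_n=\tfrac1n\sum_k M_k$ of the empirical measure, Jensen's inequality $\alpha(\bar M_n)\le\tfrac1n\sum_k\alpha(M_k)$, the $L^2$ martingale-difference estimate $\E[\langle L_n(X)-\bar M_n,\phi\rangle^2]\le 4/n$, tightness of $(\bar M_n)$ from the energy bound, and monotone approximation of the upper semicontinuous $F$ by continuous sup-convolutions --- is essentially the strategy of \cite{lacker_2020}. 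The steps check out: the energy bound $\sup_n\E[\alpha(\bar M_n)]<\infty$ follows from near-optimality together with $\widehat{\rho}_n(nF\circ L_n)\ge -n(\|F\|_\infty+\alpha(\nu_0))$ for any $\nu_0$ with $\alpha(\nu_0)<\infty$, and the final interchange $\inf_j\sup_\theta(\phi_j'-\alpha)=\sup_\theta(F-\alpha)$ is justified exactly as you indicate, by trapping near-maximizers in a fixed compact sub-level set where $\alpha$ is lower semicontinuous. One phrasing to repair in a full write-up: a continuous function on $\P(E)$ need not be uniformly continuous on a closed $\delta$-neighborhood of a compact set (such neighborhoods are typically not compact in $\P(E)$); what you need, and what your argument actually uses, is the one-sided statement that for compact $K_L$ and $\epsilon>0$ there is $\delta>0$ with $|\phi_j'(\theta)-\phi_j'(\theta')|<\epsilon$ whenever $\theta\in K_L$ and $d(\theta,\theta')<\delta$. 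This holds for any continuous $\phi_j'$ by a standard subsequence argument and suffices here, since only $\bar M_n$ is forced into $K_L$ while $L_n(X)$ is merely close to it.
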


\begin{remark}\label{rem: explanation}
As is explained in \cite{lacker_2020},
Theorem \ref{th:Sanov-Lacker} can be seen as a generalization of Sanov's theorem. Indeed, if $\alpha=H(\cdot\,|\,\mu)$, then the chain rule for relative entropy \cite[Theorem B.2.1]{dupuis2011weak} yields $\widehat{\alpha}_n = H(\cdot\,|\,\mu^{\otimes n})$, and the Gibbs variational formula \cite[Proposition 1.4.2]{dupuis2011weak} yields $\widehat{\rho}_n(f) = \log\int_{E^n} e^f \,\mathrm{d}\mu^{\otimes n}$. For any bounded continuous $F : \P(E) \to \R$, Theorem \ref{th:Sanov-Lacker} then states that
\begin{align*}
\lim_{n\to\infty} \frac{1}{n}\log \int_{E^n} e^{nF \circ L_n} \,\mathrm{d}\mu^{\otimes n} = \sup_{\nu \in \P(E)}\left( F(\nu) - H(\nu\,|\,\mu)\right).
\end{align*}
This is precisely Sanov's theorem, in Laplace principle form \cite[Theorem 2.2.1]{dupuis2011weak}. This explains why we describe Theorems \ref{th:Sanov} and \ref{th:OpNorm-Sanov} also as Sanov-type theorems.
\end{remark}

Note that Theorems \ref{th:Sanov} and \ref{th:Sanov-Lacker} both give identical upper and lower bounds, despite dealing with different tensorizations $\alpha_n$ and $\widehat{\alpha}_n$. These two tensorizations reflect the two different kinds of ``chain rules" satisfied by Fisher information and relative entropy, respectively. They are related by the following:

\begin{lemma} \label{le:tensorizations}
Assume $\alpha$ is convex and lower semicontinuous.
For every $n \in \N$, we have $\widehat{\alpha}_n(\nu) \le \alpha_n(\nu)$ for all $\nu \in \P(E^n)$, and $\widehat{\rho}_n(f) \ge \rho_n(f)$ for all $f \in B(E^n)$.
\end{lemma}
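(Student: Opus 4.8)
The plan is to prove the two assertions separately, with the first one — the pointwise inequality $\widehat{\alpha}_n(\nu) \le \alpha_n(\nu)$ — being the substantive point, and the second one $\widehat{\rho}_n(f) \ge \rho_n(f)$ following formally by taking conjugates. For the conjugate statement: since $\widehat{\alpha}_n \le \alpha_n$ pointwise on $\P(E^n)$, taking supremum of $\int f\,\mathrm{d}\nu - (\cdot)$ over $\nu$ reverses the inequality, giving $\widehat{\rho}_n(f) \ge \rho_n(f)$ for all $f \in B(E^n)$ immediately from the definitions \eqref{eq: rho_n} and \eqref{eq: rho_hat_n}. So the whole content is in the measure-level inequality.

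For $\widehat{\alpha}_n(\nu) \le \alpha_n(\nu)$, the natural approach is to compare the two sums term by term after rewriting both integrals. Fix $\nu \in \P(E^n)$ and let $X = (X_1,\dots,X_n)$ have law $\nu$. In $\widehat{\alpha}_n(\nu)$ the $k$-th term is $\E[\alpha(\text{Law}(X_k \mid X_1,\dots,X_{k-1}))]$, while in $\alpha_n(\nu)$ the $k$-th term is $\E[\alpha(\text{Law}(X_k \mid X_{-k}))]$ — conditioning on \emph{all} other coordinates rather than just the earlier ones. The key observation is that, by the tower property of conditional expectations for measure-valued random variables, $\text{Law}(X_k \mid X_1,\dots,X_{k-1})$ is the conditional expectation (as a $\P(E)$-valued random variable) of $\text{Law}(X_k \mid X_{-k})$ given the smaller $\sigma$-field $\sigma(X_1,\dots,X_{k-1})$. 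Since $\alpha$ is convex and lower semicontinuous on $\P(E)$, a Jensen-type inequality for conditional expectations of Banach-space-valued (or more precisely $\P(E)$-valued, using that $\P(E)$ embeds in a locally convex space and $\alpha$ is a convex l.s.c.\ function bounded below) random variables gives
\[
\alpha\bigl(\E[\,\text{Law}(X_k \mid X_{-k}) \mid X_1,\dots,X_{k-1}]\bigr) \le \E[\alpha(\text{Law}(X_k \mid X_{-k})) \mid X_1,\dots,X_{k-1}].
\]
Taking expectations and summing over $k$ yields $\widehat{\alpha}_n(\nu) \le \alpha_n(\nu)$.

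The main obstacle — and the step requiring care — is making the conditional Jensen inequality rigorous in the measure-valued setting. I would handle this by the standard duality/linearization trick: a convex l.s.c.\ function $\alpha$ on $\P(E)$ bounded below can be written as $\alpha(\eta) = \sup_{f \in \mathcal{A}} \bigl(\int_E f\,\mathrm{d}\eta - c(f)\bigr)$ for a suitable family $\mathcal{A} \subset B(E)$ and constants $c(f)$ (its own Fenchel biconjugate representation, valid because $\P(E)$ with weak convergence is a convex subset of the dual of $C_b(E)$ and $\alpha$ is convex, l.s.c., bounded below — so it equals the sup of the affine l.s.c.\ functions below it). For each such affine function $\eta \mapsto \int f\,\mathrm{d}\eta - c(f)$, the inequality is just the ordinary tower property: $\E[\,\int_E f\,\mathrm{d}\,\text{Law}(X_k\mid X_{-k}) \mid X_1,\dots,X_{k-1}] = \E[f(X_k)\mid X_1,\dots,X_{k-1}] = \int_E f\,\mathrm{d}\,\text{Law}(X_k\mid X_1,\dots,X_{k-1})$, so subtracting $c(f)$ and taking sup over $f$ inside versus outside the conditional expectation gives the claim (sup of conditional expectations dominates conditional expectation of the sup — here used in the favorable direction). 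One should also check integrability: $\alpha$ is bounded below so the negative parts cause no issue, and $\alpha_n(\nu) = +\infty$ is an allowed degenerate case making the inequality trivial, so we may assume $\alpha_n(\nu) < \infty$, which then forces all the relevant integrals to be finite and legitimizes the manipulations. This is essentially the argument already implicit in comparing the chain rules for Fisher information and relative entropy, and I expect the write-up to be short once the duality representation of $\alpha$ is invoked.
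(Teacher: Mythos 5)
Your proof is correct and follows essentially the same route as the paper's: identify $\nu_{k-1,k}(X_1,\dots,X_{k-1})$ as the conditional expectation of the $\P(E)$-valued random variable $\nu_{-k}(X_{-k})$ given $\sigma(X_1,\dots,X_{k-1})$ via the tower property, then apply Jensen's inequality for the convex, lower semicontinuous functional $\alpha$, and deduce the conjugate inequality $\widehat{\rho}_n \ge \rho_n$ formally. The only difference is that the paper invokes a ready-made measure-valued Jensen inequality (\cite[Proposition B.2]{lacker_2020}) where you sketch its proof via the Fenchel biconjugate representation of $\alpha$.
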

\begin{proof}
The second claim clearly follows from the first.
For the first claim, let $\nu \in \P(E^n)$, and let $X=(X_1,\ldots,X_n)$ have law $\nu$. 
The claim follows from Jensen's inequality after noting that $\nu_{k-1,k}(X_1,\ldots,X_{k-1}) = \E[\nu_{-k}(X_{-k})\,|\,X_1,\ldots,X_{k-1}]$. 
That is, for $f \in B(E)$,
\begin{align*}
\int_E f\,\mathrm{d}\nu_{k-1,k}(X_1,\ldots,X_{k-1}) &= \E[ f(X_k)\,|\,X_1,\ldots,X_{k-1}] = \E\big[ \E[ f(X_k)\,|\,X_{-k}]\,\big|\,X_1,\ldots,X_{k-1}\big] \\
	&= \E\left[ \int_E f\,\mathrm{d}\nu_{-k}(X_{-k}) \, \Big| \, X_1,\ldots,X_{k-1}\right], \quad  \text{a.s.}
\end{align*}
Convexity and lower semicontinuity of $\alpha$ imply, by a form of Jensen's inequality \cite[Proposition B.2]{lacker_2020},
\begin{align*}
\alpha(\nu_{k-1,k}(X_1,\ldots,X_{k-1}) &\le 
\E[\,\alpha(\nu_{-k}(X_{-k})) \,|\, X_1,\ldots,X_{k-1}], \quad \text{a.s.}
\end{align*}
Thus
\begin{align*}
\widehat{\alpha}_n(\nu) &= \E\left[ \sum_{k=1}^n \alpha(\nu_{k-1,k}(X_1,\ldots,X_{k-1}) \right] \le \E\left[ \sum_{k=1}^n \alpha(\nu_{-k}(X_{-k})) \right] = \alpha_n(\nu).
\end{align*}
\vskip-.3cm
\end{proof}

\noindent\textbf{Proof of the upper bound \eqref{eq: main upper bound} of Theorem \ref{th:Sanov}.}
This now follows easily by applying Lemma \ref{le:tensorizations} along with the upper bound of Theorem \ref{th:Sanov-Lacker}:
\begin{align*}
\limsup_{n\rightarrow\infty}\frac{1}{n}\rho_{n}\left(nF\circ L_{n}\right) \le \limsup_{n\rightarrow\infty}\frac{1}{n}\widehat{\rho}_{n}\left(nF\circ L_{n}\right)\leq\sup_{\nu\in\P\left(E\right)}\left( F\left(\nu\right)-\alpha\left(\nu\right)\right).
\end{align*}
{\ } \vskip-.6cm \hfill \qed

\vskip.4em

\noindent\textbf{Proof of the upper bound \eqref{eq:OpNorm-upper} of Theorem \ref{th:OpNorm-Sanov}.}
We apply Theorem \ref{th:Sanov} to $\alpha = I(\cdot\,|\,\mu)$. 
The tensorized form is then $\alpha_n = I(\cdot\,|\,\mu^{\otimes n})$, as is easily seen by comparing the definition \eqref{eq: alpha_n} with the formula from Lemma \ref{le:In-tensorized}. By Lemma \ref{lemma: duality}, the convex conjugate defined by \eqref{eq: rho_n} takes the form $\rho_n(f) = \frac{1}{t}\log\big\|\mathrm{P}_{n,t}^f\big\|_{L^2(\mu^{\otimes n})}$,
which we note does not actually depend on $t$.
The claimed upper bound is then immediate from Theorem \ref{th:Sanov}, once we note that $I(\cdot\,|\,\mu)$ is well known to be convex. See \cite[Corollary B.11]{wu2000uniformly}, for instance, which shows that $I(\cdot\,|\,\mu)$ coincides with the functional $J_\mu$ defined in \cite[Equation (5.2b)]{wu2000uniformly}, which is clearly convex. \hfill \qed

{\ } 

See also \cite{eckstein2019extended} for an extension of Theorem \ref{th:Sanov-Lacker} to different forms of tensorization oriented toward Markov chains, which however is quite different from our Theorems \ref{th:OpNorm-Sanov} or \ref{th:Sanov}.

\section{On the Sanov-type theorem and a generalization of Theorem \ref{th:W2I-main}} \label{se:WIabstract}

Continuing in the abstract setting of Section \ref{se:Sanov-abstract}, we next give  characterizations of what one might call ``$W_p\,\alpha$ inequalities," for $p=1,2$.
Assume throughout this section that $\alpha : \P(E) \to (-\infty,\infty]$ is measurable and bounded from below. Define $\alpha_n,\widehat\alpha_n  : \P(E^n) \to (-\infty,\infty]$ as in \eqref{eq: alpha_n} and \eqref{eq: alpha_hat_n}, and define $\rho_n,\widehat\rho_n : B(E^n) \to \R$ as in \eqref{eq: rho_n} and \eqref{eq: rho_hat_n}.

We begin with a simple dual characterization of the $W_1\,\alpha$ inequality, which generalizes both Theorem \ref{th:W1I} and the equivalence (1) $\Leftrightarrow$ (2) of Theorem \ref{th:W1H}.

\begin{theorem} \label{th:W1abstract} \cite[Corollary 3]{lacker2018liquidity}, \cite[Theorem 3.5]{gozlan2010transport}
Let $\const  > 0$. The following are equivalent:
\begin{enumerate}
\item $W_1^2 (\mu, \nu) \le \const \alpha (\nu)$ for all $\nu \in \P(E)$.
\item For each $\lambda \in \R$ and bounded 1-Lipschitz function $f : E \to \R$,
\[
\rho_1(\lambda f) \le \lambda \int_E f\,\mathrm{d}\mu  + \frac{\const \lambda^2}{4}.
\]
\end{enumerate}
\end{theorem}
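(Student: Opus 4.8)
The plan is to mimic the classical Bobkov--Götze / Kantorovich-duality argument, treating $\alpha$ as a black-box penalty. The key observation is that $\rho_1$ is exactly the convex conjugate of $\alpha = \alpha_1$ on $\P(E)$, so everything reduces to a Legendre-transform manipulation combined with the Kantorovich--Rubinstein dual representation of $W_1$.

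First I would prove (1) $\Rightarrow$ (2). Fix a bounded $1$-Lipschitz $f$ and $\lambda \in \R$; writing $\lambda f$ in place of a generic Lipschitz function with constant $|\lambda|$, it suffices to bound $\rho_1(\lambda f) = \sup_{\nu}\big(\int f\,\mathrm{d}\nu\cdot\lambda - \alpha(\nu)\big)$. For each $\nu$, Kantorovich--Rubinstein duality gives $\lambda\int_E f\,\mathrm{d}\nu - \lambda\int_E f\,\mathrm{d}\mu \le |\lambda|\,W_1(\mu,\nu)$, hence $\lambda\int f\,\mathrm{d}\nu - \alpha(\nu) \le \lambda\int f\,\mathrm{d}\mu + |\lambda| W_1(\mu,\nu) - \alpha(\nu)$. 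Now invoke hypothesis (1) in the form $W_1(\mu,\nu)\le \sqrt{\const\,\alpha(\nu)}$ and optimize the scalar bound $|\lambda|\sqrt{\const\, a} - a$ over $a\ge 0$, which yields the value $\const\lambda^2/4$. Taking the supremum over $\nu$ gives (2). (One must note $\alpha(\nu)\ge 0$ is not assumed, only bounded below; but when $\alpha(\nu)<0$ is possible the RHS of (1) can be negative while $W_1^2\ge 0$, forcing $\alpha\ge 0$ on the relevant set, so this is harmless — or one simply restricts attention to $\nu$ with $\alpha(\nu)<\infty$ and notes the inequality (1) already forces $\alpha(\nu)\ge W_1^2/\const\ge 0$ there.)

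Next, (2) $\Rightarrow$ (1). Fix $\nu \in \P(E)$ with $\alpha(\nu) < \infty$ (otherwise there is nothing to prove). By Kantorovich--Rubinstein duality, $W_1(\mu,\nu) = \sup_{f}\big(\int_E f\,\mathrm{d}\nu - \int_E f\,\mathrm{d}\mu\big)$ over bounded $1$-Lipschitz $f$ (boundedness is harmless by truncation since one may reduce to compactly-supported-ish $f$, or approximate). For such an $f$ and any $\lambda \ge 0$, hypothesis (2) gives $\lambda\big(\int f\,\mathrm{d}\nu - \int f\,\mathrm{d}\mu\big) \le \rho_1(\lambda f) - \lambda\int f\,\mathrm{d}\mu + \cdots$; more directly, from $\rho_1(\lambda f)\ge \lambda\int f\,\mathrm{d}\nu - \alpha(\nu)$ and (2) we get $\lambda\int f\,\mathrm{d}\nu - \alpha(\nu) \le \lambda\int f\,\mathrm{d}\mu + \const\lambda^2/4$, i.e. $\lambda\big(\int f\,\mathrm{d}\nu - \int f\,\mathrm{d}\mu\big) \le \alpha(\nu) + \const\lambda^2/4$. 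Take the supremum over $f$ to replace the left bracket by $W_1(\mu,\nu)$, then optimize over $\lambda\ge 0$: the minimum of $\alpha(\nu) + \const\lambda^2/4 - \lambda W_1(\mu,\nu)$ over $\lambda$ is $\alpha(\nu) - W_1^2(\mu,\nu)/\const$, and requiring this to be $\ge 0$ gives (1).

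The main obstacle is purely bookkeeping: reconciling the ``bounded $1$-Lipschitz'' restriction in (2) with the genuinely unbounded Lipschitz functions needed to saturate Kantorovich--Rubinstein duality when $E$ is non-compact (and $W_1(\mu,\nu)$ may be infinite). This is handled by a standard truncation: if $f$ is $1$-Lipschitz then $f_M := (-M)\vee (f\wedge M)$ is bounded and $1$-Lipschitz, $\int f_M\,\mathrm{d}\nu - \int f_M\,\mathrm{d}\mu \to \int f\,\mathrm{d}\nu - \int f\,\mathrm{d}\mu$ monotonically (after subtracting a constant to center $f$), and one passes to the limit $M\to\infty$ before optimizing over $\lambda$. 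With that caveat absorbed, both directions are one-line Legendre transforms; the result is exactly the cited statements of \cite[Corollary 3]{lacker2018liquidity} and \cite[Theorem 3.5]{gozlan2010transport}, now phrased through the conjugate functional $\rho_1$.
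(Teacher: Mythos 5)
Your proposal is correct and follows essentially the same route as the paper: both directions are the Legendre-transform identity $\const^{-1}x^2=\sup_{\lambda\ge 0}[\lambda x-\const\lambda^2/4]$ combined with Kantorovich--Rubinstein duality and the definition of $\rho_1$ as the conjugate of $\alpha$; the paper simply writes it as a chain of equivalences rather than two separate implications. Your extra remarks (that (1) or (2) each force $\alpha\ge 0$, and the truncation to bounded Lipschitz functions) address details the paper leaves implicit and are handled correctly.
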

\begin{proof}
This is known from the above references, but we include the straightforward proof for the sake of completeness: Since $\const^{-1}x^2 = \sup_{\lambda \ge 0} [\lambda x - (\const \lambda^2/4)]$ for $x \ge 0$,  (1) is equivalent to 
\begin{align*}
\lambda W_1(\mu,\nu) \le \alpha(\nu) + \frac{\const \lambda^2}{4}, \quad \forall \nu  \in \P(E), \, \lambda \ge 0.
\end{align*}
By Kantorovich duality, this is in turn equivalent to
\begin{align*}
\lambda \int_E f\,\mathrm{d}(\nu-\mu) \le \alpha(\nu) + \frac{\const \lambda^2}{4}, \quad \forall \nu  \in \P(E), \, \lambda \ge 0, \, \forall f,
\end{align*}
where the functions $f$ are understood to be $1$-Lipschitz. Using the definition of $\rho_1$, this is equivalent to
\begin{align*}
\rho_1(\lambda f) = \sup_{\nu \in \P(E)}\left(\lambda \int_E f\,\mathrm{d}\nu - \alpha(\nu)\right) \le \lambda \int_E f\,\mathrm{d}\mu + \frac{\const \lambda^2}{4}, \quad \forall \lambda \ge 0, \, \forall f.
\end{align*}
\end{proof}

There is an analogue for $W_2\,\alpha$, which we state next, which generalizes the equivalence of (1) $\Leftrightarrow$ (2) $\Leftrightarrow$ (3) in both Theorems \ref{th:W2I-main} and \ref{th:W2H}. It works for either of the tensorized forms, $\alpha_n$ or $\widehat{\alpha}_n$. Recall in the following that we always equip $E^n$ with the $\ell^2$-metric, defined in \eqref{def:ell2metric}.

\begin{theorem} \label{th:W2abstract}
Assume there exists $x_0 \in E$ such that $\int_E d^2(x,x_0)\,\mu(\mathrm{d}x) < \infty$.
Let $\const  > 0$. The following are equivalent:
\begin{enumerate}
\item[(1)] $W_2^2 (\mu, \nu) \le \const \alpha (\nu)$ for all $\nu \in \P(E)$.
\item[(2)] For each $n\in\N$,  $W_1^2 (\mu, \nu) \le \const \alpha_n (\nu)$ for all $\nu \in \P(E^n)$.
\item[(2')]  For each $n\in\N$,  $W_1^2 (\mu, \nu) \le \const \widehat{\alpha}_n (\nu)$ for all $\nu \in \P(E^n)$.
\item[(3)] For each $n\in\N$, $\lambda\in \R$, and bounded 1-Lipschitz function $f: E^n \rightarrow \R$, we have
\begin{equation*}
\rho_n(\lambda f) \leq \lambda\int_{E^{n}}f \, \mathrm{d}\mu^{\otimes n}+\frac{\const \lambda^{2}}{4}.
\end{equation*}
\item[(3')] Property (3) holds with $\widehat{\rho}_n$ in place of $\rho_n$.
\end{enumerate}
\end{theorem}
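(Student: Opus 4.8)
The plan is to close the two cycles $(1)\Rightarrow(2)\Rightarrow(3)\Rightarrow(1)$ and $(1)\Rightarrow(2')\Rightarrow(3')\Rightarrow(1)$, which together give the full equivalence. The links $(2)\Leftrightarrow(3)$ and $(2')\Leftrightarrow(3')$ require no new work: they are instances of Theorem \ref{th:W1abstract}, applied with $E^{n}$ (equipped with its $\ell^{2}$-metric), $\mu^{\otimes n}$, and $\alpha_{n}$ in place of $E$, $\mu$, and $\alpha$, so that the role of $\rho_{1}$ there is played by $\rho_{n}$ (respectively by $\widehat\rho_{n}$ when one uses $\widehat\alpha_{n}$); the hypotheses are met since $\alpha_{n}$ and $\widehat\alpha_{n}$ are measurable and bounded below whenever $\alpha$ is. Thus the real content lies in the four implications $(1)\Rightarrow(2)$, $(1)\Rightarrow(2')$, $(3)\Rightarrow(1)$, and $(3')\Rightarrow(1)$.

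For $(1)\Rightarrow(2')$ I would run the classical Marton coupling in this abstract setting. Given $\nu\in\P(E^{n})$ with ordered disintegration $\nu_{0,1},(\nu_{k-1,k})_{k\ge 2}$, build $(X,Y)$ with $Y\sim\nu$ by choosing, conditionally on $Y_{1},\dots,Y_{k-1}$, an optimal $W_{2}$-coupling of $\mu$ and $\nu_{k-1,k}(Y_{1},\dots,Y_{k-1})$, conditionally independent of the past; a standard argument gives $X\sim\mu^{\otimes n}$, while $\E[d^{2}(X_{k},Y_{k})\mid Y_{1},\dots,Y_{k-1}]=W_{2}^{2}(\mu,\nu_{k-1,k}(Y_{1},\dots,Y_{k-1}))\le \const\,\alpha(\nu_{k-1,k}(Y_{1},\dots,Y_{k-1}))$ by $(1)$. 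Summing over $k$, taking expectations, and using $W_{1}\le W_{2}$ on $E^{n}$ gives $W_{1}^{2}(\mu^{\otimes n},\nu)\le\E\bigl[\sum_{k}d^{2}(X_{k},Y_{k})\bigr]\le \const\,\widehat\alpha_{n}(\nu)$, which is $(2')$. For $(1)\Rightarrow(2)$ one needs the symmetric $\alpha_{n}$ rather than $\widehat\alpha_{n}$, and there is no pointwise comparison between the two in general. When $\alpha$ is convex and lower semicontinuous, Lemma \ref{le:tensorizations} gives $\widehat\alpha_{n}\le\alpha_{n}$, hence $(2')\Rightarrow(2)$; for general measurable $\alpha$ satisfying $(1)$, I would apply the convex case to the lower semicontinuous convex envelope $\alpha^{\ast\ast}$ of $\alpha$, which still satisfies $(1)$ because $\nu\mapsto W_{2}^{2}(\mu,\nu)$ is convex and lower semicontinuous, and which obeys $(\alpha^{\ast\ast})_{n}\le\alpha_{n}$ pointwise, yielding $W_{1}^{2}(\mu^{\otimes n},\nu)\le \const(\alpha^{\ast\ast})_{n}(\nu)\le \const\alpha_{n}(\nu)$.

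For $(3)\Rightarrow(1)$ I would reproduce, in the present notation, the argument from the proof of Theorem \ref{th:W2I-main}. Fix $M>0$ and set $F\coloneqq W_{2}(\mu,\cdot)\wedge M$, which is bounded and lower semicontinuous. Since $W_{2}^{2}(L_{n}(x),L_{n}(y))\le\tfrac1n\sum_{i}d^{2}(x_{i},y_{i})$, the function $\sqrt{n}\,F\circ L_{n}$ is $1$-Lipschitz on $(E^{n},\ell^{2})$; applying $(3)$ to it with $\lambda$ replaced by $\sqrt{n}\,\lambda'$ and dividing by $n$ gives $\tfrac1n\rho_{n}(n\lambda'\,F\circ L_{n})\le\lambda'\int F\circ L_{n}\,\mathrm{d}\mu^{\otimes n}+\tfrac{\const\lambda'^{2}}{4}$. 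The lower bound of Theorem \ref{th:Sanov}, applied to the bounded lower semicontinuous function $\lambda' F$, bounds the left-hand side from below by $\sup_{\nu}(\lambda' F(\nu)-\alpha(\nu))$, while the law of large numbers in $W_{2}$ (valid by the finite-second-moment assumption on $\mu$) forces $\int F\circ L_{n}\,\mathrm{d}\mu^{\otimes n}\to 0$. Letting $M\to\infty$ and optimizing over $\lambda'\ge 0$ yields $\const^{-1}W_{2}^{2}(\mu,\nu)\le\alpha(\nu)$ for all $\nu$, i.e.\ $(1)$. The implication $(3')\Rightarrow(1)$ is the same argument verbatim with $\widehat\rho_{n}$, $\widehat\alpha_{n}$, and the lower bound of Theorem \ref{th:Sanov-Lacker} in place of $\rho_{n}$, $\alpha_{n}$, and the lower bound of Theorem \ref{th:Sanov}.

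I expect the main obstacle to be the bookkeeping in $(1)\Rightarrow(2)$: the Marton coupling naturally produces the ordered functional $\widehat\alpha_{n}$, whereas $(2)$ asks for the symmetric $\alpha_{n}$, and bridging this gap genuinely requires convexity — supplied either by Lemma \ref{le:tensorizations} directly, or in the general case by passing to the convex envelope. The substantive input, by contrast, is the Laplace-type lower bound of Theorem \ref{th:Sanov} feeding into $(3)\Rightarrow(1)$; everything else amounts to assembling Kantorovich duality (through Theorem \ref{th:W1abstract}), a Marton-type coupling, and the Wasserstein law of large numbers.
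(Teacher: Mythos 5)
Your proposal is correct, and the backbone — (2)$\Leftrightarrow$(3) and (2')$\Leftrightarrow$(3') via Theorem \ref{th:W1abstract}, and (3)$\Rightarrow$(1), (3')$\Rightarrow$(1) via $F=W_2(\mu,\cdot)\wedge M$ fed into the lower bounds of Theorems \ref{th:Sanov} and \ref{th:Sanov-Lacker} — coincides with the paper's proof. Where you genuinely diverge is in the implications (1)$\Rightarrow$(2) and (1)$\Rightarrow$(2'). The paper dispatches both in two lines by citing known tensorization inequalities for $W_2^2(\mu^{\otimes n},\cdot)$: the \emph{unordered} one, $W_2^2(\mu^{\otimes n},\nu)\le\int\sum_k W_2^2(\mu,\nu_{-k}(x_{-k}))\,\nu(\mathrm{d}x)$ from \cite[Lemma 2.11]{guillin2009transportation} for (2), and the \emph{ordered} one from \cite[Proposition A.1]{gozlan2010transport} for (2'). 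You instead reprove the ordered inequality from scratch via the Marton coupling (correct, modulo the standard measurable-selection point for choosing optimal couplings as a function of $(Y_1,\dots,Y_{k-1})$, which the cited reference handles), and then, lacking the unordered inequality, you bridge from $\widehat\alpha_n$ to $\alpha_n$ by passing to the convex lower semicontinuous envelope of $\alpha$ and invoking Lemma \ref{le:tensorizations}. That detour is valid: the envelope is lsc (hence measurable) and bounded below, it still dominates $\const^{-1}W_2^2(\mu,\cdot)$ because the latter is convex and lsc, its unordered tensorization is dominated by $\alpha_n$ by pointwise monotonicity, and Lemma \ref{le:tensorizations} applies to it. What your route buys is self-containedness and a clean isolation of where convexity enters; what the paper's route buys is brevity and the fact that (1)$\Rightarrow$(2) holds with no convexity considerations at all, since the unordered $W_2$ tensorization inequality is available directly. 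Both are correct proofs of the theorem as stated.
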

\noindent\textit{Proof.}
\begin{itemize} 
\item (1) $\Rightarrow$ (2): 
Let $n \in \N$ and $\nu \in \P(E^n)$. Apply Jensen's inequality, followed by a known tensorization inequality for $W_2$ given in \cite[Lemma 2.11]{guillin2009transportation}, and then (1):
\begin{align*}
W^2_1( \mu^{\otimes n}, \nu) &\le W^2_2( \mu^{\otimes n}, \nu)  \le  \int_{E^n} \sum_{k=1}^n W_2^2\big(\mu, \nu_{-k} (x_{-k}) \big) \, \nu(\mathrm{d} x)  \\
& \le \const \int_{E^n} \sum_{k=1}^n \alpha \big( \nu_{-k} (x_{-k}) \big) \, \nu(\mathrm{d} x) = \const \alpha_n(\nu).
\end{align*} 
\item (1) $\Rightarrow$ (2'): 
Let $n \in \N$ and $\nu \in \P(E^n)$. Apply a (different) known tensorization inequality for $W_2$ given in \cite[Proposition A.1]{gozlan2010transport}, and then (1):
\begin{align*}
W^2_1( \mu^{\otimes n}, \nu) &\le W^2_2( \mu^{\otimes n}, \nu)  \le  \int_{E^n} \sum_{k=1}^n W_2^2\big(\mu, \nu_{k-1,k} (x_1,\ldots,x_{k-1}) \big) \, \nu(\mathrm{d} x)  \\
& \le \const \int_{E^n} \sum_{k=1}^n \alpha \big( \nu_{k-1,k} (x_1,\ldots,x_{k-1} )\big) \, \nu(\mathrm{d} x) = \const \widehat{\alpha}_n(\nu).
\end{align*} 
\item (2) $\Leftrightarrow$ (3) and (2') $\Leftrightarrow$ (3') follow by applying Theorem \ref{th:W1abstract} to the conjugate pairs $(\rho_n,\alpha_n)$ and $(\widehat{\rho}_n,\widehat{\alpha}_n)$, respectively.
\item (3) $\Rightarrow$ (1):
Let  $M>0$ and $\lambda \ge 0$. As in the proof of Theorem \ref{th:W2I-main}, define $F: \P(E) \to \R$ by $F := W_2(\mu, \cdot) \wedge M$, and note that $\sqrt{n}F \circ L_n$ is 1-Lipschitz on $E^n$ for each $n$. 
Thus (3) yields
\begin{equation*}
\frac{1}{n}\rho_n(\lambda n F \circ L_n) \leq \lambda\int_{E^{n}}F \circ L_n \, \mathrm{d}\mu^{\otimes n}+\frac{\const \lambda^{2}}{4}.
\end{equation*}
The right-hand side converges as $n\to\infty$ to $\const \lambda^2/4$ by the law of large numbers in Wasserstein distance.
Since $F$ is bounded and lower-semicontinuous, we may apply the lower bound of Theorem \ref{th:Sanov} to get
\begin{align*}
\sup_{\nu\in\P \left(E\right)} \Bigl (\lambda W_{2}(\mu,\nu) \wedge M - \alpha(\nu)\Bigr) &\le \liminf_{n\rightarrow\infty}\frac{1}{n}\rho_n(\lambda n F \circ L_n) \le \frac{\const \lambda^{2}}{4}.
\end{align*}
Consequently, for all $\nu\in\P(E)$, $\lambda\geq 0$, and $M > 0$, we have
\begin{equation*}
\alpha(\nu)\geq \lambda W_{2}(\mu,\nu) \wedge M - \frac{\const \lambda^{2}}{4}
\end{equation*}
Send $M \rightarrow \infty$ and optimize over $\lambda$ to get $\alpha(\nu) \ge \const^{-1}W_2^2(\mu,\nu)$.
\item (3') $\Rightarrow$ (1): This is proved exactly as (3) $\Rightarrow$ (1), simply replacing $\rho_n$ by $\widehat{\rho}_n$ and applying Theorem \ref{th:Sanov-Lacker} instead of Theorem \ref{th:Sanov}. \hfill \qed
\end{itemize}

\subsection*{Acknowledgment}
We thank Ioannis Karatzas for helpful discussions and comments.

\bibliographystyle{amsplain}
\bibliography{Lacker_Yeung_reference}

\end{document}